\documentclass[12pt]{amsart}
\marginparwidth 0pt \oddsidemargin 0pt \evensidemargin 0pt \marginparsep 0pt
\topmargin 0pt \textwidth 5.8in \textheight 9.3 in

\usepackage{amsmath,amssymb}
\usepackage{calrsfs}
\usepackage{url}
\overfullrule 5pt

\newcommand{\Z}{{\mathbb{Z}}}

\newcommand{\D}{{\mathbb{D}}}
\newcommand{\Q}{{\mathbb{Q}}}

\newcommand{\C}{{\mathbb{C}}}
\newcommand{\F}{{\mathbb{F}}}

\newcommand{\fS}{{\mathfrak{S}}}

\newcommand{\fU}{{\mathfrak{U}}}

\newcommand{\cE}{{\mathcal{E}}}

\newcommand{\cF}{{\mathcal{F}}}

\newcommand{\cH}{{\mathcal{H}}}

\newcommand{\Irr}{{\operatorname{Irr}}}

\renewcommand{\leq}{\leqslant}
\renewcommand{\geq}{\geqslant}

\newtheorem{thm}{Theorem}[section]

\newtheorem{prop}[thm]{Proposition}

\theoremstyle{definition}
\newtheorem{defn}[thm]{Definition}
\newtheorem{exmp}[thm]{Example}
\newtheorem{rema}[thm]{}
\theoremstyle{remark}
\newtheorem{rem}[thm]{Remark}
\numberwithin{equation}{section}

\begin{document}

\title[On the values of unipotent characters]{On the values of 
unipotent characters in bad characteristic}
\author{Meinolf Geck}
\address{IAZ - Lehrstuhl f\"ur Algebra\\Universit\"at Stuttgart\\ 
Pfaffenwaldring 57\\D--70569 Stuttgart\\ Germany}
\email{meinolf.geck@mathematik.uni-stuttgart.de}

\subjclass{Primary 20C33; Secondary 20G40}
\keywords{Finite groups of Lie type, unipotent characters, character 
sheaves} 
\date{November 11, 2017}

\begin{abstract} Let $G(q)$ be a Chevalley group over a finite field
$\F_q$. By Lusztig's and Shoji's work, the problem of computing the 
values of the unipotent characters of $G(q)$ is solved, in principle, by
the theory of character sheaves; one issue in this solution is the 
determination of certain scalars relating two types of class functions 
on $G(q)$. We show that this issue can be reduced to the case where 
$q$ is a prime, which opens the way to use computer algebra methods. 
Here, and in a sequel to this article, we use this approach to solve a 
number of cases in groups of exceptional type which seemed hitherto out of 
reach.
\end{abstract}

\maketitle

\section{Introduction} \label{sec0}

Let $G(q)$ be a group of Lie type over a finite field with $q$ elements. 
This paper is concerned with the problem of computing the values of the 
irreducible characters of $G(q)$. The work of Lusztig \cite{L1}, 
\cite{Lintr} has led to a general program for solving this problem. In 
this framework, one has to establish certain identities of class 
functions on $G(q)$ of the form $R_x=\xi \chi_A$, where $R_x$ denotes 
an ``almost character'' (that is, an explicitly known linear combination 
of irreducible characters) and $\chi_A$ denotes the characteristic function 
of a suitable ``character sheaf'' on the underlying algebraic group~$G$; 
furthermore, $\xi$ is a scalar of absolute value~$1$. This program has been 
successfully carried out in many cases, see, e.g., Bonnaf\'e \cite{Bo3}, 
Lusztig \cite{L3}, \cite{L7}, Shoji \cite{Sclass}, \cite{S7} and 
Waldspurger \cite{wald}, but not in complete generality.

In this paper, we will assume that $G(q)$ is of split type and only
consider the above problem as far as unipotent characters of $G(q)$ are 
concerned, as defined by Deligne and Lusztig \cite{DeLu}. By Shoji's work 
\cite{S2}, \cite{S3}, we know that the desired identities $R_x=\xi \chi_A$ 
as above hold, but the scalars $\xi$ are not yet determined in all cases. 
And even in those cases where they are known, this often required elaborate 
computations. In such cases, the scalars then turn out to behave rather 
uniformly as~$q$ varies (see, e.g., Shoji \cite{Sclass}, \cite{S7}). The 
main theoretical result of this paper, Proposition~\ref{p1}, provides a 
partial, {\it a priori} explanation for this phenomenon; the proof is 
merely an elaboration of ideas which are already contained in Lusztig's 
and Shoji's papers. The fact that we can formulate this result without 
any assumptions on $q$ essentially relies on Lusztig \cite{L10}, where 
the ``cleanness'' of cuspidal character sheaves is established in complete 
generality and, consequently, the principal results of \cite{L2a}--\cite{L2e}
(e.g. \cite[Theorems~23.1 and 25.2]{L2e}) hold unconditionally. 

The main observation of this paper is that the statement of 
Proposition~\ref{p1} can also be exploited in a different way, as 
follows. For a given type of group, we consider the base case where $q=p$ 
is a prime. For a specific value of~$p$, we can use ad hoc methods and/or 
computer algebra systems like {\sf GAP} \cite{gap} to perform all kinds of 
computations within the fixed finite group $G(p)$. If we succeed in this 
way to determine the scalars $\xi$ for $G(p)$, then Proposition~\ref{p1} 
tells us that the analogous result will hold for any power of~$p$. This is 
particularly relevant for ``bad'' primes $p=2,3,5$ which, typically, are 
known to cause additional complications and require separate arguments. We 
illustrate this procedure with a number of examples. In particular, we
determine the scalars $\zeta$ in two cases, where the character table
of $G(\F_p)$ is explicitly known; namely, $F_4$, $E_6$ and $p=2$. For type
$F_4$, our results complete earlier results of Marcelo--Shinoda \cite{MaSh}. 
See also \cite{GeHe} for the discussion of further examples, where the 
complete character table of $G(\F_p)$ is not known.

We assume some familiarity with the character theory of finite groups
of Lie type; see, e.g., \cite{C2}, \cite{first}. The basic reference for the
theory of character sheaves are Lusztig's papers \cite{L2a}--\cite{L2e}. In 
Section~\ref{sec1}, we review the classification of unipotent characters 
of $G(q)$ and the analogous classification of the unipotent character 
sheaves on $G$. These two classifications are known to be the same for 
$G(q)$ of split type (a fact which has only recently found a conceptual 
explanation; see Lusztig \cite{L11}). In Section~\ref{sec2}, we can then 
formulate in precise terms the problem of equating class functions 
$R_x=\xi\chi_A$ as above, and establish Proposition~\ref{p1}. Finally, 
Sections~\ref{sec3}, \ref{secF4}, \ref{secE6} contain a number of examples 
where we show how the scalars $\xi$ can be determined using standard functions 
and algorithms in {\sf GAP}.

\begin{rema} {\bf Notation.} \label{abs11} Let $\ell$ be a prime such that
$\ell\nmid q$. If $\Gamma$ is a finite group, we denote by $\mbox{CF}
(\Gamma)$ the vector space of $\overline{\Q}_\ell$-valued functions 
on $G^F$ which are constant on the conjugacy classes of $\Gamma$. (We take 
$\overline{\Q}_\ell$ instead of $\C$ since, in the framework of \cite{DeLu}, 
\cite{Lintr}, class functions on $\Gamma=G(q)$ are constructed whose values 
are cyclotomic numbers in $\overline{\Q}_\ell$.) Given $f,f'\in \mbox{CF}
(\Gamma)$, we denote by $\langle f,f' \rangle=|\Gamma|^{-1} \sum_{g \in 
\Gamma} f(g)\overline{f'(g)}$ the standard scalar product of $f,f'$ where 
the bar denotes a field automorphism which maps roots of unity to their 
inverses. Let $\Irr(\Gamma)$ be the set of irreducible characters of 
$\Gamma$ over $\overline{\Q}_\ell$; these form an orthonormal basis of 
$\mbox{CF}(\Gamma)$ with respect to the above scalar product. 
\end{rema}

\section{Unipotent character sheaves and almost characters} \label{sec1}

Let $p$ be a prime and $k=\overline{\F}_p$ be an algebraic 
closure of the field with $p$ elements. Let $G$ be a connected reductive 
algebraic group over $k$ and assume that $G$ is defined over the 
finite subfield $\F_q\subseteq k$, where $q=p^f$ for some $f\geq 1$. Let 
$F\colon G\rightarrow G$ be the corresponding Frobenius map. Let 
$B\subseteq G$ be an $F$-stable Borel subgroup and $T\subseteq B$ be an 
$F$-stable maximal torus. Let $W=N_G(T)/T$ be the corresponding Weyl group. 
We assume that $F$ acts trivially on $W$ and that $F(t)=t^q$ for all 
$t\in T$. Then the group of rational points $G^F=G(\F_q)$ is a finite group 
of Lie type of ``split type''. 

\begin{rema} \label{r11} For each $w\in W$, let $R_w$ be the virtual 
character of $G^F$ defined by Deligne--Lusztig \cite[\S 1]{DeLu}. Let 
$\fU(G^F)$ be the set of unipotent characters of $G^F$, that is,
\[ \fU(G^F)=\{\rho\in\Irr(G^F)\mid \langle \rho,R_w\rangle\neq 0\mbox{ for 
some $w\in W$}\}.\]
Now \cite[Main Theorem 4.23]{L1} provides a classification of $\fU(G^F)$ in 
terms of 
\begin{itemize}
\item a parameter set $X(W)$ and a pairing $\{\;,\;\} \colon X(W)\times 
X(W)\rightarrow \overline{\Q}_\ell$ (which only depend on $W$),
\item an embedding $\Irr(W) \hookrightarrow X(W)$, $\epsilon\mapsto 
x_\epsilon$.
\end{itemize}
(See \cite[4.21]{L1} for precise definitions; recall that we assume
that $F$ acts trivially on $W$). Indeed, there is a bijection 
\[ \fU(G^F)\leftrightarrow X(W),\qquad \rho\leftrightarrow x_\rho,\]
such that for any $\rho\in\fU(G^F)$ and any $\epsilon\in\Irr(W)$, we have
\[ \langle \rho,R_\epsilon\rangle=\delta_\rho\{x_\rho,x_\epsilon\}\qquad
\mbox{where} \qquad R_\epsilon:=\frac{1}{|W|} \sum_{w\in W} \epsilon(w)R_w.\]
Here, for $\rho\in\fU(G^F)$, we define a sign $\delta_\rho=\pm 1$ by the
condition that $\delta_\rho D_G(\rho)\in\fU(G^F)$, where $D_G$ denotes 
the duality operator on the character ring of $G$; see \cite[6.8]{L1}.
Note that \cite[6.20]{L1} identifies $\delta_\rho$ with the 
sign $\Delta(x_\rho)$ appearing in the formulation of \cite[4.23]{L1}.
\end{rema}

\begin{rem} \label{r11a} Assume that $G$ is simple modulo its centre. The 
bijection $\fU(G^F)\leftrightarrow X(W)$ in \ref{r11} is not uniquely 
determined by the properties stated above. We shall make a definite choice 
according to \cite[12.6]{L1} and the tables in the appendix of \cite{L1}. 
In particular, this means the following. Let us fix a square root of $q$ in 
$\overline{\Q}_\ell$. Then it is well-known that the irreducible characters 
of $G^F$ which occur in the character of the permutation representation of 
$G^F$ on the cosets of $B^F$ are naturally parametrised by the irreducible 
characters of~$W$; see, e.g., \cite[8.7]{L1}. If $\epsilon\in\Irr(W)$, we 
denote the corresponding irreducible character of $G^F$ by~$\rho_\epsilon$; 
clearly, $\rho_\epsilon \in \fU(G^F)$. Hence, under a bijection $\fU(G^F)
\leftrightarrow X(W)$ as in \ref{r11}, the character $\rho_\epsilon$ will 
correspond to an element $x_{\rho_\epsilon}$. By \cite[Prop.~12.6]{L1}, we 
automatically have $x_\epsilon=x_{\rho_\epsilon}$ except when $\epsilon(1)=
512$ and $G$ is of type $E_7$, or when $\epsilon(1)=4096$ and $G$ is of 
type $E_8$. In these exceptional cases, $\fU(G^F)\leftrightarrow X(W)$ 
can still be chosen such that $x_\epsilon=x_{\rho_\epsilon}$; see the tables 
for $E_7$, $E_8$ in the appendix of \cite{L1}. 

In order to obtain a full uniqueness statement, one has to take into account 
Harish-Chandra series and further invariants of the characters in $\fU(G^F)$, 
namely, the ``eigenvalues of Frobenius'' as determined in \cite[11.2]{L1}; 
see \cite[Prop.~6.4]{DiMi1}, \cite[\S 3]{L10a}, \cite[\S 4]{first}.
\end{rem}

\begin{rema} \label{r12} The ``Fourier matrix'' $\Upsilon:=
\bigl(\{x,y\} \bigr)_{x,y\in X(W)}$ is hermitian, and $\Upsilon^2$ is the 
identity matrix (see \cite[4.14]{L1}). For each $x\in X(W)$, the 
corresponding unipotent ``almost character'' $R_x$ is defined by 
\[ R_x:=\sum_{\rho\in\fU(G^F)} \delta_{\rho} \{x_\rho,x\}\rho;\qquad
\mbox{see \cite[4.24.1]{L1}}.\]
Note that $R_{x_\epsilon}=R_\epsilon$ for $\epsilon\in\Irr(W)$. 
For any $x,y \in X(W)$ we have 
\[ \langle R_x,R_y\rangle=\left\{\begin{array}{cl} 1 & \qquad \mbox{if
$x=y$}, \\ 0 & \qquad \mbox{otherwise}.\end{array}\right.\]
Since $\Upsilon^2$ is the identity matrix, we obtain 
\[ \rho=\delta_{\rho}\sum_{x\in X(W)} \{x,x_\rho\}R_x \qquad \mbox{for any
$\rho\in\fU(G^F)$}.\]
Thus, the problem of computing the values of $\rho\in\fU(G^F)$ is 
equivalent to the analogous problem for the unipotent almost characters 
$R_x$, $x\in X(W)$.
\end{rema}

\begin{rema} \label{r13} Let $\hat{G}$ be the set of character sheaves on
$G$ (up to isomorphism). These are certain simple perverse sheaves in the 
bounded derived category $\mathcal{D}G$ of constructible 
$\overline{\Q}_\ell$-sheaves on $G$ (in the sense of Beilinson, Bernstein, 
Deligne \cite{bbd}), which are equivariant for the action of $G$ on itself 
by conjugation. For $w\in W$ let $K_w^{\mathcal{L}_0}\in\mathcal{D}G$ 
be defined as in \cite[2.4]{L2a}, where $\mathcal{L}_0=\overline{\Q}_\ell$ is 
the constant local system on the maximal torus~$T$. Let $\hat{G}^{\text{un}}$ 
be the set of unipotent character sheaves, that is, those $A\in \hat{G}$ 
which are constituents of a perverse cohomology sheaf ${^p\!H}^i
(K_w^{\mathcal{L}_0})$ for some $w\in W$ and some $i\in\Z$ (see 
\cite[Def.~2.10]{L2a}). Let $\epsilon \in\Irr(W)$. In analogy to the above
definition of $R_\epsilon$, we formally define
\[K_\epsilon^{\mathcal{L}_0}:=\frac{1}{|W|} \sum_{w\in W} \epsilon(w)
\sum_{i\in\Z} (-1)^{i+\dim G}\, {^p\!H}^i(K_w^{\mathcal{L}_0});\] 
see \cite[14.10.3]{L2c}. (We write $K_\epsilon^{\mathcal{L}_0}$ in order 
to avoid confusion with $R_\epsilon$ in \ref{r11}.) As in 
\cite[14.10.4]{L2c}, we also denote by $(A:K_\epsilon^{\mathcal{L}_0})$ 
the multiplicity of $A\in\hat{G}^{\text{un}}$ in $K_\epsilon^{\mathcal{L}_0}$
(in the appropriate Grothendieck group). 
\end{rema}

\begin{rema} \label{r14} Now \cite[Theorem 23.1]{L2e} (see also the comments
in \cite[3.10]{L10}) provides a classification of $\hat{G}^{\text{un}}$ in 
terms of similar ingredients as in \ref{r11}. Indeed, let the parameter set
$X(W)$, the pairing $\{\;,\;\} \colon X(W)\times X(W)\rightarrow 
\overline{\Q}_\ell$ and the embedding $\Irr(W) \hookrightarrow X(W)$ be as 
above. Then there is a bijection 
\[ \hat{G}^{\text{un}}\leftrightarrow X(W),\qquad A\leftrightarrow x_A,\]
such that $(A:K_\epsilon^{\mathcal{L}_0}) =\hat{\varepsilon}_{A}\{x_A,
x_\epsilon\}$ for any $A\in \hat{G}^{\text{un}}$ and $\epsilon\in\Irr(W)$. 
Here, we set 
\[\hat{\varepsilon}_K:=(-1)^{\dim G-\dim\text{supp}(K)}\qquad \mbox{for any
$K\in\mathcal{D}G$},\]
where $\mbox{supp}(K)$ is the Zariski closure of the set 
$\{g\in G\mid \cH_g^i(K)\neq \{0\} \mbox{ for some $i$}\}$. (Cf.\ 
\cite[15.11]{L2c}.) Here, $\cH_g^i(K)$ are the stalks at~$g\in G$ of 
the cohomology sheaves of~$K$, for any $i\in \Z$.

Assume that $G$ is simple modulo its centre. Then, again, the bijection 
$\hat{G}^{\text{un}}\leftrightarrow X(W)$ is not uniquely determined by 
the above properties. But one obtains a full uniqueness statement by an 
analogous scheme as in Remark~\ref{r11a}; see \cite[\S 3]{L10a}. 
\end{rema}

\begin{rema} \label{r15} Consider any object $A\in \mathcal{D}G$ and 
suppose that its inverse image $F^*A$ under the Frobenius map is isomorphic
to $A$ in $\mathcal{D}G$. Let $\phi \colon F^*A \stackrel{\sim}{\rightarrow} 
A$ be an isomorphism. Then $\phi$ induces a linear map $\phi_{i,g}\colon 
\cH_g^i(A)\rightarrow \cH_g^i(A)$ for each $i$ and $g\in G^F$. This 
gives rise to a class function $\chi_{A,\phi}\in \mbox{CF}(G^F)$, called 
``characteristic function'' of $A$, defined by 
\[\chi_{A,\phi}(g)=\sum_i (-1)^i \mbox{Trace}(\phi_{i,g},\cH_g^i(A))\qquad
\mbox{for $g \in G^F$},\] 
see \cite[8.4]{L2b}. Note that $\phi$ is unique up to a non-zero scalar;
hence, $\chi_{A,\phi}$ is unique up to a non-zero scalar.

Now assume that $A\in\hat{G}$. Then one can choose an isomorphism $\phi_A
\colon F^*A \stackrel{\sim}{\rightarrow} A$ such that the values of 
$\chi_{A,\phi_A}$ are cyclotomic integers and $\langle \chi_{A,\phi_A},
\chi_{A,\phi_A}\rangle=1$; see \cite[25.6, 25.7]{L2e} (and also the comments
in \cite[3.10]{L10}). The precise conditions which guarantee these 
properties are formulated in \cite[13.8]{L2c}, \cite[25.1]{L2e}; note that 
these conditions specify $\phi_A$ up to multiplication by a root of unity.
In the following, we will tacitly assume that $\phi_A$ has been chosen
in this way whenever $A\cong F^*A$. 
\end{rema}

\begin{thm}[Shoji \protect{\cite[5.7]{S2}, \cite[3.2, 4.1]{S3}}] \label{r16} 
Assume that $Z(G)$ is connected and that $G/Z(G)$ is simple; also recall that 
$F$ is assumed to act trivially on $W$. Let $A\in\hat{G}^{\operatorname{un}}$ 
and $x\in X(W)$ be such that $x=x_A$. Then $F^*A\cong A$ and 
$R_x$ is equal to $\chi_{A,\phi_A}$, up to a non-zero scalar multiple.
\end{thm}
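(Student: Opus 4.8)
The plan is to compare two bases of the space of unipotent class functions $\mathrm{CF}(G^F)^{\mathrm{un}}$: on the one hand the almost characters $R_x$, $x\in X(W)$, and on the other hand the characteristic functions $\chi_{A,\phi_A}$, $A\in\hat G^{\mathrm{un}}$. Both are indexed by $X(W)$ via the bijections of \ref{r11} (resp.\ \ref{r14}), and both are ``almost orthonormal'': from \ref{r12} we have $\langle R_x,R_y\rangle=\delta_{xy}$, and from \ref{r15} we have $\langle\chi_{A,\phi_A},\chi_{A,\phi_A}\rangle=1$ for each $A$. So the key point is to show that, after matching $x=x_A$, these bases agree up to scalars, i.e.\ that the transition matrix between them is diagonal (necessarily with entries of absolute value $1$). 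First I would note that $F^*A\cong A$: this follows because $A$ is a unipotent character sheaf and $F$ acts trivially on $W$, hence $F$ permutes $\hat G^{\mathrm{un}}$ compatibly with the $X(W)$-labelling, which is $F$-fixed by the uniqueness scheme referred to in \ref{r14}; so $x_{F^*A}=x_A=x_A$ forces $F^*A\cong A$, and $\chi_{A,\phi_A}$ is then defined (up to a root of unity).

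Next I would invoke Lusztig's character-sheaf analogue of the Deligne--Lusztig theory: by \cite[Thm.~23.1, 25.2]{L2e} (available unconditionally thanks to \cite{L10}, as emphasised in the introduction), the functions $\{\chi_{A,\phi_A}\}$ span the same subspace of $\mathrm{CF}(G^F)$ as the $\{R_x\}$ — both span the span of the $R_w^{\mathcal{L}_0}$-type functions, equivalently the unipotent part of $\mathrm{CF}(G^F)$. Concretely, the identity $(A:K_\epsilon^{\mathcal L_0})=\hat\varepsilon_A\{x_A,x_\epsilon\}$ from \ref{r14}, combined with the Lefschetz-type formula expressing the characteristic function of $K_\epsilon^{\mathcal L_0}$ in terms of $R_\epsilon$ (this is where the ``cleanness'' input and \cite[Thm.~25.2]{L2e} enter), should yield a relation of the shape
\[
R_\epsilon=\sum_{A\in\hat G^{\mathrm{un}}} \gamma_A\,\hat\varepsilon_A\{x_A,x_\epsilon\}\,\chi_{A,\phi_A}
\]
for suitable nonzero scalars $\gamma_A$ depending only on $A$ (not on $\epsilon$), valid for all $\epsilon\in\Irr(W)$. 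Expanding $R_x$ for general $x\in X(W)$ via $\Upsilon$ (using $R_{x_\epsilon}=R_\epsilon$ and $\Upsilon^2=\mathrm{id}$) and the analogous expansion on the character-sheaf side then gives $R_x=\gamma_{A}\,\hat\varepsilon_A\,\chi_{A,\phi_A}$ whenever $x=x_A$, because the Fourier matrix $\Upsilon$ is the \emph{same} on both sides and it is invertible. This is the desired statement, with $\xi=\gamma_A\hat\varepsilon_A$.

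I expect the main obstacle to be making the Lefschetz-type identity in the previous paragraph precise and, in particular, verifying that the scalar $\gamma_A$ relating $\chi_{A,\phi_A}$ to its ``geometric'' normalisation is genuinely independent of $\epsilon$ and well defined given the $\phi_A$ fixed in \ref{r15}. This requires carefully tracking the various sign and degree-shift conventions: the $(-1)^{i+\dim G}$ in the definition of $K_\epsilon^{\mathcal L_0}$, the factor $\hat\varepsilon_A=(-1)^{\dim G-\dim\mathrm{supp}(A)}$, and the normalisation of $\phi_A$ via \cite[13.8]{L2c}, \cite[25.1]{L2e}. Once these bookkeeping issues are resolved, the argument is essentially formal linear algebra: two almost-orthonormal bases of the same finite-dimensional space, indexed by the same set, related by a matrix that the Fourier-matrix computation forces to be diagonal. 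I would also record that the hypotheses ``$Z(G)$ connected, $G/Z(G)$ simple, $F$ trivial on $W$'' are exactly what is needed for the clean matching of the two $X(W)$-parametrisations and for the results of \cite{S2},\cite{S3} to apply verbatim, so no further reduction is needed here.
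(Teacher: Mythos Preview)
First, note that the paper does not give its own proof of this theorem: it is stated as a result of Shoji (with references to \cite{S2}, \cite{S3}), and the only additional commentary is that the restriction on $p$ in those papers can now be dropped thanks to Lusztig's cleanness theorem \cite{L10}. So there is no ``paper's proof'' to compare against beyond the pointer to Shoji's arguments.

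That said, your sketch contains a genuine gap in the linear-algebra step. The formula you aim for (whether for $R_\epsilon$ or, as in Lusztig \cite[14.14]{L2c}, for $\rho_\epsilon$) has the shape
\[
\rho_\epsilon \;=\; (-1)^{\dim G}\sum_{x\in X(W)} \xi_x\,\hat\varepsilon_x\,\{x,x_\epsilon\}\,\chi_x
\qquad(\epsilon\in\Irr(W)),
\]
while on the almost-character side one has $\rho_\epsilon=\sum_{y\in X(W)}\{y,x_\epsilon\}R_y$. Comparing these two expressions gives one linear relation for each $\epsilon\in\Irr(W)$. But $\Irr(W)$ embeds \emph{properly} into $X(W)$ in general (e.g.\ for $F_4$ one has $|\Irr(W)|=25$ versus $|X(W)|=37$), so the matrix $(\{x,x_\epsilon\})_{x\in X(W),\,\epsilon\in\Irr(W)}$ is a non-square submatrix of the Fourier matrix~$\Upsilon$ and is not invertible. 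Hence these relations alone do not force $R_y$ to be a scalar multiple of $\chi_y$; there is genuine freedom left, and your appeal to ``$\Upsilon^2=\mathrm{id}$'' does not close it. Equivalently: knowing that two orthonormal families of size $|X(W)|$ have the same inner products against the $|\Irr(W)|$ functions $\rho_\epsilon$ does not make the transition matrix diagonal.

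Shoji's actual proof supplies the missing constraints by bringing in the full machinery of induced character sheaves and generalized Green functions: one works family by family, uses parabolic (twisted) induction from Levi subgroups and the corresponding orthogonality relations for characteristic functions (again relying on cleanness), and matches these with the Harish-Chandra/Lusztig induction on the almost-character side. This produces enough independent relations---indexed, in effect, by all of $X(W)$ rather than just by $\Irr(W)$---to pin down the diagonal form. So the ``bookkeeping'' you anticipate in your last paragraph is not the real obstacle; the real obstacle is that the system of equations you write down is underdetermined, and substantial extra input (the content of \cite{S2}, \cite{S3}) is needed to complete it.
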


As already mentioned, Shoji's results also apply to non-split groups and to 
non-unipotent characters. In \cite{S2}, \cite{S3} it is assumed, however, 
that $p$ is ``almost good'', that is, the following conditions hold. If $G$ 
is type $A_n$, $B_n$, $C_n$ or $D_n$, no condition. If $G$ is of type $E_6$, 
then $p\neq 2$. If $G$ is of type $G_2$, $F_4$ or $E_7$, then $p\neq 2,3$. 
If $G$ is of type $E_8$, then $p\neq 2,3,5$ (see \cite[23.0.1]{L2e}). 
Since Lusztig \cite{L10} has established the ``cleanness'' of cuspidal 
character sheaves in full generality, the methods in \cite{S2}, \cite{S3}
which were used to prove Theorem~\ref{r16} for $G$ of exceptional type and 
almost good~$p$ can now also be applied for any~$p$. Consequently, as 
Shoji pointed out to the author, Theorem~\ref{r16} holds as stated above,
without any condition on~$p$. 

\begin{defn}  \label{s1} In the setting of Theorem~\ref{r16}, let $A\in 
\hat{G}^{\operatorname{un}}$ and $x\in X(W)$ be such that $x=x_A$. Recall 
that $\phi_A\colon F^*A \stackrel{\sim}{\rightarrow} A$ is assumed to be
chosen as in \ref{r15}. Then we define $0\neq \zeta_A\in\overline{\Q}_\ell$
by the condition that $R_x=(-1)^{\dim G}\hat{\varepsilon}_A\zeta_A\chi_{A,
\phi_A}$.
\end{defn}

\begin{rema} \label{r17} In the setting of \ref{r11a}, let $\epsilon\in
\Irr(W)$ and consider the corresponding character~$\rho_\epsilon$. We have 
$\delta_{\rho_\epsilon}=1$ and $x_\epsilon=x_{\rho_\epsilon}$. So, using
the formula in \ref{r12}, we obtain:
\begin{align*} 
\rho_\epsilon&=\sum_{x\in X(W)} \{x, x_{\rho_\epsilon}\}R_x=
\sum_{x\in X(W)} \{x, x_\epsilon\}R_x\\&=(-1)^{\dim G}\sum_{A\in 
\hat{G}^{\text{un}}} \hat{\varepsilon}_A\zeta_A\{x_A, x_\epsilon\}
\chi_{A,\phi_A}\qquad \mbox{(see Def.~\ref{s1})}\\&=(-1)^{\dim G}
\sum_{A\in \hat{G}^{\text{un}}} \zeta_A(A:K_\epsilon^{\mathcal{L}_0})
\chi_{A,\phi_A} \qquad \mbox{(see \ref{r14})}.
\end{align*}
Such an expression for $\rho_\epsilon$ as a linear combination of 
characteristic functions first appeared in \cite[14.14]{L2c}; it is actually 
an important ingredient in the proof of Theorem~\ref{r16}. On the other hand,
the argument in \cite[14.14]{L2c} relies on an alternative interpretation 
of the coefficients $\zeta_A$, which we will consider in more detail in 
the following section. Note that, for a given $A\in\hat{G}^{\text{un}}$, 
there always exists some $\epsilon\in\Irr(W)$ such that 
$(A:K_\epsilon^{\mathcal{L}_0})\neq 0$; see \cite[14.12]{L2c}. Furthermore, 
by \cite[25.2]{L2e} (and the comments in \cite[3.10]{L10}), the functions 
$\{\chi_{A,\phi_A}\mid A\in \hat{G}^{\text{un}}\}$ are linearly independent. 
It follows that the coefficients $\zeta_A$ are uniquely determined by the 
above system of equations, where $\epsilon$ runs over $\Irr(W)$. 
\end{rema}

\section{The scalars $\zeta_A$} \label{sec2}

We keep the basic assumptions of the previous section; we also assume
that $Z(G)$ is connected and $G/Z(G)$ is simple. We fix a square root of~$q$ 
in $\overline{\Q}_\ell$. For any $A\in\hat{G}^{\text{un}}$, we know by
Theorem~\ref{r16} that $F^*A\cong A$; we assume that an isomorphism 
$\phi_A\colon F^*A \stackrel{\sim}{\rightarrow} A$ has been chosen as 
in~\ref{r15}. Our aim is to get hold of the coefficients $\zeta_A$ in 
Definition~\ref{s1}. 

\begin{rema} \label{r18} For the further discussion, it will be convenient
to change the notation and label everything by elements of $X(W)$. Thus,
via the bijection $\fU(G^F)\leftrightarrow X(W)$ in \ref{r11} (arranged as 
in Remark~\ref{r11a}), we can write
\[ \fU(G^F)=\{\rho_x \mid x \in X(W)\} \qquad \mbox{where}\qquad
\rho_\epsilon=\rho_{x_\epsilon} \mbox{ for $\epsilon \in \Irr(W)$}.\]
For $x \in X(W)$, we write $\delta_x:=\delta_{\rho_x}$. Then $R_y=\sum_{x 
\in X(W)}\delta_x \{x,y\}\rho_x$ for all $y \in X(W)$. Next, via the
bijection $\hat{G}^{\text{un}}\leftrightarrow X(W)$ in \ref{r14}, we can 
write
\[ \hat{G}^{\text{un}}=\{A_x \mid x \in X(W)\}.\]
For $x \in X(W)$, we denote an isomorphism $F^*A_x \cong A_x$ as 
in \ref{r15} by $\phi_x$ and the corresponding characteristic function
simply by~$\chi_x$. Then the relation in Definition~\ref{s1} is rephrased as 
\[ R_x=(-1)^{\dim G} \hat{\varepsilon}_x \zeta_x\chi_x \qquad \mbox{where}
\qquad \hat{\varepsilon}_x:=\hat{\varepsilon}_{A_x} \mbox{ and } \zeta_x:=
\zeta_{A_x}.\]
For $\epsilon\in\Irr(W)$, the identity in~\ref{r17} now reads: 
\[ \rho_{\epsilon}=(-1)^{\dim G} \sum_{x\in X(W)} \zeta_x(A_x:
K_\epsilon^{\mathcal{L}_0}) \chi_x.\]
\end{rema}

\begin{rema} \label{r21} Let us fix an integer $m\geq 1$. Then $G$ is also 
defined over $\F_{q^m}$ and $F^m\colon G\rightarrow G$ is the corresponding 
Frobenius map. Clearly, $F^m$ acts trivially on $W$ and we have $F^m(t)=
t^{q^m}$ for all $t\in T$. So the whole discussion in Section~\ref{sec1} can 
be applied to $F^m$ instead of~$F$. As in \ref{r18}, we write 
\[ \fU(G^{F^m})=\{\rho_x^{(m)} \mid x \in X(W)\}.\]
Again, the unipotent characters of $G^{F^m}$ which occur in the character 
of the permutation representation of $G^{F^m}$ on the cosets of $B^{F^m}$ 
are naturally parametrised by $\Irr(W)$. If $\epsilon\in\Irr(W)$, we denote 
the corresponding character of $G^{F^m}$ by~$\rho_\epsilon^{(m)}$. As in 
Remark~\ref{r11a}, the labelling of $\fU(G^{F^m})$ is arranged such that 
\[\rho_\epsilon^{(m)}=\rho_{x_\epsilon}^{(m)} \qquad \mbox{for $\epsilon \in 
\Irr(W)$}.\]
For $y \in X(W)$, the corresponding unipotent almost character of $G^{F^m}$ 
is given by
\[ R_y^{(m)}=\sum_{x \in X(W)} \delta_x\{x,y\}\rho_x^{(m)}.\]
Note that $\delta_{\rho_x^{(m)}}=\delta_{\rho_x}=\delta_x$ by
\cite[4.23, 6.20]{L1}.
\end{rema}

\begin{rema} \label{r22} Let $x\in X(W)$. Then $\phi_x\colon F^*A_x
\stackrel{\sim}{\rightarrow} A_x$ naturally induces isomorphisms
\[ F^*(\phi_x)\colon (F^*)^2A_x\stackrel{\sim}{\rightarrow} F^*A_x,\quad
(F^*)^2(\phi_x)\colon (F^*)^3A_x\stackrel{\sim}{\rightarrow} (F^*)^2A_x,
\quad \ldots,\]
which give rise to an isomorphism 
\[\tilde{\phi}_x^{(m)}:=\phi_x\circ F^*(\phi_x)\circ \ldots \circ 
(F^*)^{m-1}(\phi_x) \colon (F^*)^mA_x\stackrel{\sim}{\rightarrow} A_x.\] 
We also have a canonical isomorphism $(F^*)^mA_x\cong (F^m)^* A_x$ which,
finally, induces an isomorphism
\[\phi_x^{(m)} \colon (F^m)^*A_x\stackrel{\sim}{\rightarrow} A_x\qquad
\mbox{(see \cite[1.1]{S2})}.\]
The latter isomorphism again satisfies the 
conditions in \ref{r15}; we denote the corresponding characteristic function
by $\chi_x^{(m)}\colon G^{F^m}\rightarrow \overline{\Q}_\ell$. Note that,
if $g$ is an element in $G^F$ (and not just in $G^{F^m}$), then 
\[ \chi_x^{(m)}(g)=\sum_i (-1)^i\mbox{Trace}\bigl((\phi_x)_{i,g}^m,
\cH_g^i(A_x)\bigr).\]
(See again \cite[1.1]{S2}.) As in Definition~\ref{s1}, we define $0\neq 
\zeta_x^{(m)}\in \overline{\Q}_\ell$ by the condition that
\[ R_x^{(m)}=(-1)^{\dim G}\hat{\varepsilon}_x\zeta_x^{(m)}\chi_x^{(m)}.\]
(Thus, if $m=1$, then $\zeta_x^{(1)}=\zeta_x$.) We can now state the main
result of this section.
\end{rema}

\begin{prop} \label{p1} In the setting of \ref{r21}, \ref{r22}, we have
$\zeta_x^{(m)}=\zeta_x^m$ for all $x\in X(W)$.
\end{prop}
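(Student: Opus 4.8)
The plan is to compare the two systems of equations expressing the unipotent characters of $G^F$ (resp.\ $G^{F^m}$) as linear combinations of the characteristic functions $\chi_x$ (resp.\ $\chi_x^{(m)}$), exploiting the fact that the Fourier matrix $\bigl(\{x,y\}\bigr)$, the signs $\delta_x$, $\hat\varepsilon_x$, and the multiplicities $(A_x:K_\epsilon^{\mathcal{L}_0})$ are all independent of the field of definition. The starting point is the identity from \ref{r18}, which for $F^m$ in place of $F$ reads
\[ \rho_{\epsilon}^{(m)}=(-1)^{\dim G} \sum_{x\in X(W)} \zeta_x^{(m)}(A_x:K_\epsilon^{\mathcal{L}_0})\,\chi_x^{(m)} \qquad(\epsilon\in\Irr(W)).\]
By \ref{r17}, the system indexed by $\epsilon\in\Irr(W)$ determines the coefficients $\zeta_x^{(m)}$ uniquely (using that the $\chi_x^{(m)}$ are linearly independent and that every $A_x$ occurs in some $K_\epsilon^{\mathcal{L}_0}$). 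So it suffices to produce \emph{some} set of scalars satisfying this system and check they equal $\zeta_x^m$; equivalently, I want to show that replacing $\zeta_x^{(m)}$ by $\zeta_x^m$ and $\chi_x^{(m)}$ by the actual characteristic function built from $\phi_x^{(m)}$ is consistent.

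The key step is a Lefschetz-type/base-change computation relating $\chi_x^{(m)}$ to $\chi_x$. First I would reduce the identity to an identity of values on the subset $G^F\subseteq G^{F^m}$: by \cite[1.1]{S2} (quoted in \ref{r22}), for $g\in G^F$ one has $\chi_x^{(m)}(g)=\sum_i(-1)^i\mathrm{Trace}\bigl((\phi_x)_{i,g}^m,\cH_g^i(A_x)\bigr)$, i.e.\ the eigenvalues of $(\phi_x)_{i,g}$ on the stalks get raised to the $m$-th power. Meanwhile the almost character $R_x^{(m)}$, evaluated at $g\in G^F$, is governed by the Deligne--Lusztig theory for $F^m$, and the Shintani-descent/Lusztig formalism (as in \cite{L2e}, and already used by Shoji) expresses $R_x^{(m)}(g)$ in terms of the same local data with Frobenius eigenvalues raised to the $m$-th power. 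The cleanest route is: take the defining relation $R_x=(-1)^{\dim G}\hat\varepsilon_x\zeta_x\chi_x$ on $G^F$, observe that both sides are built from the cohomology sheaves of $A_x$ together with the action of $\phi_x$ on their stalks (the left side via the expansion of $\rho_{\epsilon}$ in \ref{r17} inverted through the Fourier matrix, the right side directly), and then pass from $\phi_x$ to $\phi_x^m=(\phi_x)^{(m)}|_{G^F}$. Because $\hat\varepsilon_x$, $\dim G$, $\delta_x$, the pairing $\{x,y\}$ and the multiplicities $(A_x:K_\epsilon^{\mathcal{L}_0})$ do not change when $F$ is replaced by $F^m$, raising all the relevant Frobenius eigenvalues to the $m$-th power sends $\zeta_x\mapsto\zeta_x^m$ and $\chi_x\mapsto\chi_x^{(m)}$ simultaneously, which is exactly the assertion.

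Concretely, I would argue as follows. Fix $x$ and pick $\epsilon\in\Irr(W)$ with $(A_x:K_\epsilon^{\mathcal{L}_0})\neq0$. From \ref{r18} applied to $F$ and to $F^m$, and the linear independence of the $\chi_y$ (resp.\ $\chi_y^{(m)}$), the scalar $\zeta_x^{(m)}$ is recovered as the unique solution of a linear system whose coefficient matrix is $\bigl((A_y:K_\epsilon^{\mathcal{L}_0})\bigr)_{y,\epsilon}$ — the \emph{same} matrix for all $m$ — once the transition matrix between $\{\rho_\epsilon^{(m)}\}$ and $\{\chi_y^{(m)}\}$ is known. Thus it is enough to compute, for each $y$, the ratio $R_y^{(m)}(g)/\chi_y^{(m)}(g)$ at a single well-chosen $g\in G^F$ where $\chi_y^{(m)}(g)\neq0$ (such $g$ exists, e.g.\ a unipotent element in the support), and to check this ratio equals $(-1)^{\dim G}\hat\varepsilon_y\zeta_y^m$. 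Here $\chi_y^{(m)}(g)$ is an explicit alternating sum of traces of $(\phi_y)_{i,g}^m$, and $R_y^{(m)}(g)$ — expanded via the Fourier matrix into the $\rho_z^{(m)}(g)$, whose generalized Green-function description makes the Frobenius-eigenvalue dependence transparent — acquires precisely the $m$-th powers of the Frobenius eigenvalues that occur for $m=1$. Comparing term by term gives $\zeta_y^{(m)}=\zeta_y^m$.

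The main obstacle I anticipate is making the last ``term-by-term'' comparison rigorous without rederiving large parts of \cite{L2c}, \cite{L2e}: one must be careful that the normalization of $\phi_x$ (fixed in \ref{r15} up to a root of unity via the conditions of \cite[13.8]{L2c}, \cite[25.1]{L2e}) is compatible with the induced normalization of $\phi_x^{(m)}$ — i.e.\ that $\phi_x^{(m)}$ as constructed in \ref{r22} is indeed \emph{the} chosen isomorphism for $F^m$ and not off by an extra root of unity depending on $m$. This compatibility is essentially \cite[1.1]{S2} together with the multiplicativity of the relevant normalizing conditions under $F\mapsto F^m$, but it is exactly the point that must be checked carefully; once it is in place, the rest is bookkeeping with the $F$-independent combinatorial data.
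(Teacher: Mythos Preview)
Your proposal has a genuine gap at the central step. You correctly collect the $m$-independent ingredients (the Fourier matrix, the signs $\delta_x$, $\hat\varepsilon_x$, the multiplicities $(A_x:K_\epsilon^{\mathcal{L}_0})$, the linear independence of the $\chi_x^{(m)}$) and the formula $\chi_x^{(m)}(g)=\sum_i(-1)^i\mathrm{Trace}\bigl((\phi_x)_{i,g}^m,\cH_g^i(A_x)\bigr)$ for $g\in G^F$. But the claim that $R_x^{(m)}$ (or $\rho_\epsilon^{(m)}$) restricted to $G^F$ ``acquires precisely the $m$-th powers of the Frobenius eigenvalues that occur for $m=1$'' is asserted, not proved. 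The almost character $R_x^{(m)}$ is defined as a $\overline{\Q}_\ell$-linear combination of irreducible characters of $G^{F^m}$; there is no direct formula expressing its restriction to $G^F$ in terms of traces of powers of a single endomorphism. Invoking ``Shintani descent/Lusztig formalism'' here is not a proof but a pointer to another body of work of comparable depth. Your own final paragraph concedes that making the term-by-term comparison rigorous would amount to rederiving large parts of \cite{L2c}, \cite{L2e}; that is an accurate self-diagnosis.

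The paper's argument avoids this difficulty entirely by changing the characterization of $\zeta_x$. Rather than comparing $R_x^{(m)}$ with $\chi_x^{(m)}$, it uses Lusztig's result (Theorem~\ref{r24}, from \cite[13.10, 14.14]{L2c}): for $K={^p\!H}^i(\bar{K}_{\dot{w}}^{\mathcal{L}_0})$ and $V_x=\Hom(A_x,K)$, the map $\psi_x\colon v\mapsto \varphi\circ F^*(v)\circ\phi_x^{-1}$ has all eigenvalues equal to $\zeta_x\,q^{(i-\dim G)/2}$. One then checks by a short induction on $m$ that the analogous map $\psi_x^{(m)}$ built from $\varphi^{(m)}$ and $\phi_x^{(m)}$ equals $\psi_x^m$ (a purely formal composition identity using $(F^*)^m\cong (F^m)^*$). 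Hence the eigenvalues of $\psi_x^{(m)}$ are $\zeta_x^m\,q^{m(i-\dim G)/2}$, and applying Theorem~\ref{r24} with $F^m$ in place of $F$ gives $\zeta_x^{(m)}=\zeta_x^m$. The point you are missing is this eigenvalue interpretation of $\zeta_x$: once $\zeta_x$ is realized as an eigenvalue of a concrete linear endomorphism $\psi_x$ on a fixed finite-dimensional space, multiplicativity under $F\mapsto F^m$ becomes the elementary statement $\psi_x^{(m)}=\psi_x^m$, and no comparison of character values is needed at all. Incidentally, your worry about the normalization of $\phi_x^{(m)}$ is handled automatically in this setup: $\phi_x^{(m)}$ is \emph{defined} in \ref{r22} as the composite built from $\phi_x$, and the proof works with that specific choice throughout.
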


A result of this kind is implicitly contained in Lusztig \cite[\S 14]{L2c} 
and Shoji \cite[\S 2 and 5.19]{S2}; the proof will be given in \ref{r25}. 
First, we need some preparations.

\begin{rema} \label{r23} We recall some constructions from \cite[\S 12,
\S 13]{L2c}. For any $w\in W$, we assume chosen once and for all a
representative $\dot{w}\in N_G(T)$. There is a corresponding complex 
$\bar{K}_{\dot{w}}^{\mathcal{L}_0}\in \mathcal{D}G$ as defined in 
\cite[12.1]{L2c}. Then $\hat{G}^{\text{un}}$ can also be characterized as 
the set of isomorphism classes of simple perverse sheaves on $G$ which 
occur as constituents of a perverse cohomology sheaf ${^p\!H}^i(
\bar{K}_{\dot{w}}^{\mathcal{L_0}})$ for some $w\in W$ and some $i\in\Z$. 
Furthermore, for each $i$, there is a natural isomorphism 
\[\varphi_{i,\dot{w}}\colon F^*({^p\!H}^i(\bar{K}_{\dot{w}}^{\mathcal{L}_0}))
\stackrel{\sim}{\rightarrow} {^p\!H}^i(\bar{K}_{\dot{w}}^{\mathcal{L}_0}); 
\qquad \mbox{see \cite[12.2, 13.8]{L2c}}.\] 
The advantage of using 
$\bar{K}_{\dot{w}}^{\mathcal{L}_0}$ instead of $K_w^{\mathcal{L}_0}$ (see 
\ref{r13}) is that $\bar{K}_{\dot{w}}^{\mathcal{L}_0}$ is semisimple
(see \cite[12.8]{L2c}). Let us now fix $i$, $w$ and denote $K:={^p\!H}^i
(\bar{K}_{\dot{w}}^{\mathcal{L}_0})$. We also set $\varphi:=\varphi_{i,
\dot{w}} \colon F^*K\stackrel{\sim}{\rightarrow}K$ and recall that, for each 
$x\in X(W)$, we are given $\phi_x\colon F^*A_x \stackrel{\sim}{\rightarrow}
A_x$. Now, following Lusztig \cite[13.8.2]{L2c}, there is a canonical 
isomorphism
\[K\cong \bigoplus_{x\in X(W)} (A_x\otimes V_x)\]
where $V_x$ are finite-dimensional vector spaces over $\overline{\Q}_\ell$
endowed with linear maps $\psi_x\colon V_x\rightarrow V_x$ such that, under 
the above direct sum decomposition, the map $\phi_x\otimes \psi_x$ 
corresponds to the given $\varphi\colon F^*K\stackrel{\sim}{\rightarrow} K$. 
More precisely, $V_x$ and $\psi_x$ are as follows (cf.\ \cite[10.4]{L2b} 
and \cite[3.5]{L3}). We have $V_x=\mbox{Hom}(A_x,K)$ and 
\[ \psi_x(v)=\varphi\circ F^*(v)\circ \phi_x^{-1} \qquad\mbox{for
$v\in V_x$},\]
where $F^*(v)\in \mbox{Hom}(F^*A_x,F^*K)$ is the map induced by $v\colon 
A_x\rightarrow K$. 
\end{rema}

\begin{thm}[Lusztig \protect{\cite[13.10, 14.14]{L2c}}] \label{r24} In the
setting of \ref{r23}, all eigenvalues of $\psi_x\colon V_x\rightarrow V_x$
are equal to $\zeta_x q^{(i-\dim G)/2}$.
\end{thm}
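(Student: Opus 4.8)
The plan is to combine the \emph{purity} of the semisimple complex $\bar{K}_{\dot{w}}^{\mathcal{L}_0}$ with the characteristic-function identities of Section~\ref{sec1} and the linear independence of the functions $\chi_x$. This is the line of argument of Lusztig~\cite[\S 13, \S 14]{L2c}; the one point that needs attention in our generality is that the required purity is available for all~$p$, which rests on \cite{L10}.

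First I would set up the purity statement. By \cite[12.8]{L2c}, $\bar{K}_{\dot{w}}^{\mathcal{L}_0}$ is semisimple; moreover it is \emph{pure}. In the presence of cuspidal unipotent character sheaves among its constituents this latter assertion is a substantial point, resting on the cleanness of cuspidal character sheaves, and it is precisely here that \cite{L10}, which establishes cleanness with no restriction on~$p$, is needed in order to go beyond the ``almost good'' primes of \cite{S2}, \cite{S3}. Granting purity: for each $i$ the perverse cohomology sheaf $K={}^p\!H^i(\bar{K}_{\dot{w}}^{\mathcal{L}_0})$ is pure with respect to $\varphi_{i,\dot{w}}$, so in the decomposition $K\cong\bigoplus_x(A_x\otimes V_x)$ of \ref{r23}, with the $\phi_x$ normalized as in \ref{r15} (so that $A_x$ is pure of weight~$0$ after the standard shifts), each multiplicity space $(V_x,\psi_x)$ is pure of weight $i-\dim G$; equivalently, every eigenvalue $\lambda$ of $\psi_x$ satisfies $|\iota(\lambda)|=q^{(i-\dim G)/2}$ for every field embedding $\iota\colon\overline{\Q}_\ell\hookrightarrow\C$. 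This is \cite[13.10]{L2c}, and it already pins down the absolute value of the asserted eigenvalues; it remains to see that they all coincide and equal $\zeta_x q^{(i-\dim G)/2}$.

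For the exact value I would compute the characteristic function of $\bar{K}_{\dot{w}}^{\mathcal{L}_0}$ twice. On one side, additivity of characteristic functions over direct sums together with $\chi_{A_x\otimes V_x,\,\phi_x\otimes\psi_x}=\operatorname{Trace}(\psi_x,V_x)\,\chi_x$, applied to each ${}^p\!H^i$ and summed with signs $(-1)^i$, give $\chi_{\bar{K}_{\dot{w}}^{\mathcal{L}_0}}=\sum_x\bigl(\sum_i(-1)^i\operatorname{Trace}(\psi_x^{(i,w)},V_x^{(i,w)})\bigr)\chi_x$. On the other side, by the compatibility built into the classification \ref{r14} (going back to \cite[\S 12, 14.10]{L2c} and \cite[\S 8]{L2b}) this characteristic function is, up to a sign, the Deligne--Lusztig virtual character $R_w$. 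Writing $R_w=\sum_{\epsilon\in\Irr(W)}\epsilon(w^{-1})R_\epsilon$, using $R_\epsilon=R_{x_\epsilon}$ and Theorem~\ref{r16} in the form $R_{x_\epsilon}=(-1)^{\dim G}\hat{\varepsilon}_{x_\epsilon}\zeta_{x_\epsilon}\chi_{x_\epsilon}$ (Definition~\ref{s1}), and invoking the linear independence of the $\chi_x$ (\ref{r17}, \cite[25.2]{L2e}), one reads off, for each $w$, that $\sum_i(-1)^i\operatorname{Trace}(\psi_x^{(i,w)},V_x^{(i,w)})$ vanishes unless $x$ lies in the image of $\Irr(W)\hookrightarrow X(W)$ and equals $\pm(-1)^{\dim G}\epsilon(w^{-1})\hat{\varepsilon}_{x_\epsilon}\zeta_{x_\epsilon}$ when $x=x_\epsilon$.

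The final step is to promote these identities, which only control the alternating sum over~$i$, to statements about each individual~$i$ and to reach all $x\in X(W)$; here I would use purity a second time. Since the contribution of ${}^p\!H^i$ carries the definite weight $q^{(i-\dim G)/2}$, the scalar products $\langle\chi_{{}^p\!H^i(\bar{K}_{\dot{w}}^{\mathcal{L}_0})},\chi_{{}^p\!H^{i'}(\bar{K}_{\dot{w}}^{\mathcal{L}_0})}\rangle$ (and their analogues for distinct $w$), expanded via the orthonormality $\langle\chi_x,\chi_y\rangle=\delta_{x,y}$, split the known $q$-independent value $\langle R_w,R_{w}\rangle=|C_W(w)|$ weight by weight; the extremal pieces of this splitting are exactly the triangle inequalities $|\iota(\operatorname{Trace}(\psi_x^{(i,w)}))|\leq q^{(i-\dim G)/2}\dim V_x^{(i,w)}$ forced to be equalities for every~$\iota$, which together with the purity bound on the individual eigenvalues forces all eigenvalues of each $\psi_x^{(i,w)}$ to coincide; the trace evaluations of the previous paragraph then identify the common eigenvalue as $\zeta_x q^{(i-\dim G)/2}$, uniformly in $i$ and $w$. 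The main obstacle throughout is the purity input at the outset: in bad characteristic it is genuinely non-trivial and rests on \cite{L10}; granting it, what remains is careful bookkeeping with the Weil structures $\varphi_{i,\dot{w}}$ and the normalizations of the $\phi_x$, which is what Lusztig carries out in \cite[13.10, 14.14]{L2c}.
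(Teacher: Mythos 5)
There is a genuine gap, and it sits exactly where the real content of the statement lies. The paper does not reprove Lusztig's \cite[13.10]{L2c}; it quotes it: the assertion that, for a \emph{single} constant $\xi_x$ depending only on $\phi_x$ (and not on $i$ or $w$), \emph{all} eigenvalues of every $\psi_x^{(i,w)}$ equal $\xi_x q^{(i-\dim G)/2}$ is taken as given, and the only argument actually supplied is the identification $\xi_x=\zeta_x$, obtained by comparing Lusztig's formula $\rho_\epsilon=(-1)^{\dim G}\sum_x\xi_x(A_x:K_\epsilon^{\mathcal{L}_0})\chi_x$ from \cite[14.14]{L2c} with the formula of \ref{r17}, \ref{r18} and using the linear independence of the $\chi_x$. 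Your purity input and your use of linear independence are in the right spirit, but your two substitutes for \cite[13.10]{L2c} do not work.

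First, your characteristic-function comparison is made against the wrong class function. Since $R_w=\sum_\epsilon\epsilon(w^{-1})R_{x_\epsilon}$ and each $R_{x_\epsilon}$ is a scalar multiple of $\chi_{x_\epsilon}$, the function $R_w$ lies in the span of $\{\chi_{x_\epsilon}\mid\epsilon\in\Irr(W)\}$; by linear independence your identity then forces $\sum_i(-1)^i\operatorname{Trace}(\psi_x^{(i,w)})=0$ for every $x$ not of the form $x_\epsilon$ --- in particular for every cuspidal $A_x$, which is precisely the case this paper needs. A vanishing alternating trace identifies nothing, so your concluding step (``the trace evaluations then identify the common eigenvalue as $\zeta_x q^{(i-\dim G)/2}$'') is empty exactly where it matters; the comparison that does carry information is the one with $\rho_\epsilon$ (not $R_\epsilon$), i.e.\ \cite[14.14]{L2c}, whose derivation already presupposes \cite[13.10]{L2c}. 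Second, the extremality argument fails: $\langle R_w,R_w\rangle=|C_W(w)|$ controls only $\sum_x\bigl|\sum_i(-1)^i\operatorname{Trace}(\psi_x^{(i,w)})\bigr|^2$, whereas the triangle-inequality bound $\sum_x\bigl(\sum_i q^{(i-\dim G)/2}\dim V_x^{(i,w)}\bigr)^2$ grows with $q$ and vastly exceeds $|C_W(w)|$; there is no equality case to exploit, because the alternating sum over $i$ conceals massive cancellation between the perverse degrees. Finally, even granting that each $\psi_x^{(i,w)}$ acts as a scalar, nothing in your argument shows that this scalar divided by $q^{(i-\dim G)/2}$ is independent of $i$ and $w$ --- and it is exactly this independence that makes a single $\zeta_x$ well defined and Proposition~\ref{p1} meaningful. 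Lusztig's proof of \cite[13.10]{L2c} is a genuinely different and more delicate argument; here you should cite it (noting, as you correctly do, that its cleanness hypothesis now holds for all $p$ by \cite{L10}) and then carry out only the comparison of \cite[14.14]{L2c} with \ref{r17}, \ref{r18}.
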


More precisely, Lusztig first shows in \cite[13.10]{L2c} that there is
a constant $0\neq \xi_x\in\overline{\Q}_\ell$ (which only depends on 
$\phi_x$, the choice of a square root of $q$ and the choice of the
representatives~$\dot{w}$) such that, for any $i$ and~$w$, all eigenvalues 
of $\psi_x\colon V_x\rightarrow V_x$ are equal to $\xi_x q^{(i-\dim G)/2}$.
(The ``cleanness'' assumption in \cite[13.10]{L2c} holds in general by 
\cite{L10}.) It is then shown in \cite[14.14]{L2c} that 
\[\rho_\epsilon=(-1)^{\dim G} \sum_{x\in X(W)} \xi_x(A_x:
K_\epsilon^{\mathcal{L}_0}) \chi_x \qquad \mbox{for all $\epsilon
\in\Irr(W)$}.\]
(There are also coefficients $\nu(A_x)$ in the formula in \cite[14.14]{L2c} 
but these are all equal to~$1$ in our situation.) Finally, a comparison with 
the formula in \ref{r18} implies that $\xi_x=\zeta_x$ for all $x\in X(W)$.

\begin{rema} \label{r25} {\it Proof of Proposition~\ref{p1}.} Let
$x\in X(W)$. We also fix $i,w$ and place ourselves in the setting
of \ref{r23}. The whole discussion there can be repeated with $F$ replaced
by $F^m$. As in \ref{r22}, we have isomorphisms
\[\tilde{\phi}_x^{(m)}\colon (F^*)^mA_x\stackrel{\sim}{\rightarrow} A_x
\qquad \mbox{and}\qquad {\phi}_x^{(m)}\colon (F^m)^*A_x
\stackrel{\sim}{\rightarrow} A_x.\]
Analogously, $\varphi\colon F^*K\stackrel{\sim}{\rightarrow} K$ induces
isomorphisms 
\[\tilde{\varphi}^{(m)}\colon (F^*)^mK\stackrel{\sim}{\rightarrow} K\qquad 
\mbox{and}\qquad \varphi^{(m)}\colon (F^m)^*K\stackrel{\sim}{\rightarrow}
K.\]
Let us consider again the canonical isomorphism 
\[K\cong \bigoplus_{x\in X(W)} (A_x\otimes V_x)\qquad\mbox{where} \qquad
V_x=\mbox{Hom}(A_x,K).\]
Here, as before, each $V_x$ is endowed with a linear map $\psi_x^{(m)}\colon 
V_x \rightarrow V_x$ such that 
\[\psi_x^{(m)}(v)=\varphi^{(m)} \circ (F^m)^*(v)\circ (\phi_x^{(m)})^{-1} 
\qquad \mbox{for all $v\in V_x$}.\]
By Theorem~\ref{r24}, the scalar $\zeta_x^{(m)}$ is determined by the 
eigenvalues of $\psi_x^{(m)}$. 

Now, a simple induction on $m$ shows that 
\[\psi_x^m(v)=\tilde{\varphi}^{(m)} \circ (F^*)^m(v)\circ 
(\tilde{\phi}_x^{(m)})^{-1} \qquad \mbox{for all $v\in V_x$}.\]
Next, we use again that we have canonical isomorphisms $(F^*)^mA_x\cong 
(F^m)^*A_x$ and $(F^*)^mK\cong (F^m)^*K$. Under these isomorphisms,
the above map $(F^*)^m(v)$ corresponds to the map $(F^m)^*(v)\colon 
(F^m)^*A_x \rightarrow (F^m)^*K$. Hence, we also have 
\[\psi_x^m(v)=\varphi^{(m)} \circ (F^m)^*(v)\circ (\phi_x^{(m)})^{-1} 
\qquad \mbox{for all $v\in V_x$}.\]
Thus, we have $\psi_x^{(m)}=\psi_x^m$ for all $x\in X(W)$ and so the 
eigenvalues of $\psi_x^{(m)}$ are obtained by raising the eigenvalues 
of $\psi_x$ to the $m$-th power. It remains to use Theorem~\ref{r24}. 
This completes the proof of Proposition~\ref{p1}. \qed
\end{rema}

\begin{rem} \label{exp1} We can, and will assume that $G$ is defined and 
split over the prime field $\F_p$ of~$k$. Let $F_0\colon G\rightarrow G$ 
be the corresponding Frobenius map. If $q=p^f$ where $f\geq 1$, then $F=
F_0^f$. Hence, Proposition~\ref{p1} means that it will be sufficient to 
determine the scalars $\zeta_x$ $(x\in X(W)$) for the group $G^{F_0}=
G(\F_p)$. For specific values of~$p$ (e.g., bad primes $p=2,3,5$), we may 
then use ad hoc information which is available, for example, in the Cambridge
{\sf ATLAS} \cite{atl}, or via computer algebra methods (using {\sf GAP}
\cite{gap}, {\sf CHEVIE} \cite{jmich}). This is the basis for the 
discussion of the examples below.
\end{rem}

\section{Cuspidal character sheaves and small rank examples} \label{sec3}

We keep the notation of the previous section; in particular, we label
all objects by the parameter set $X(W)$ as in \ref{r18}. By \cite[3.5]{L3}, 
the computation of the scalar $\zeta_x$ can be reduced to the case where 
$A_x$ is a cuspidal character sheaf (in the sense of \cite[Def.~3.10]{L2a}). 
So let us look in more detail at this case.

\begin{rema} \label{exp2} Assume that $Z(G)=\{1\}$. Let $x\in X(W)$ be such
that $A_x\in\hat{G}^{\text{un}}$ is cuspidal. Then there exists an $F$-stable 
conjugacy class $C$ of $G$ and an irreducible, $G$-equivariant 
$\overline{\Q}_\ell$-local system $\cE$ on $C$ such that $F^*\cE\cong 
\cE$ and $A=\mbox{IC}(\overline{C},\cE)[\dim C]$; see \cite[3.12]{L2a}. In 
particular, $\mbox{supp}(A_x)=\overline{C}$ and so $\hat{\varepsilon}_{A_x}=
(-1)^{\dim G-\dim C}$. Let us fix $g_1\in C^F$ and set $A(g_1):=C_G(g_1)/
C_G^\circ(g_1)$. Then $F$ induces an automorphism $\gamma\colon A(g_1)
\rightarrow A(g_1)$. We further assume that:
\begin{itemize}
\item[($*$)] the local system $\cE$ is one-dimensional and, hence, 
corresponds to a $\gamma$-invariant linear character $\lambda\colon 
A(g_1) \rightarrow \overline{\Q}_\ell^\times$ (via \cite[19.7]{Ldisc4}).  
\end{itemize}
(This assumption will be satisfied in all examples that we consider.)
We form the semidirect product $\tilde{A}(g_1)=A(g_1)\rtimes \langle 
\gamma\rangle$ such that, inside $\tilde{A}(g_1)$, we have the 
identity $\gamma(a)=\gamma a\gamma^{-1}$ for all $a\in A(g_1)$. By ($*$), 
we can canonically extend $\lambda$ to a linear character 
\[\tilde{\lambda}\colon \tilde{A}(g_1) \rightarrow 
\overline{\Q}_\ell^\times,\qquad  a\gamma\mapsto \lambda(a).\]
For each $a\in A(g_1)$ we have a corresponding element $g_a\in C^F$, 
well-defined up to conjugation within $G^F$. (We have $g_a=hg_1h^{-1}$
where $h\in G$ is such that $h^{-1}F(h)\in C_G(g_1)$ has image $a\in 
A(g_1)$.) We define a class function $\chi_{g_1,\lambda} \colon G^F
\rightarrow \overline{\Q}_\ell$ by
\[\chi_{g_1,\lambda}(g)=\left\{\begin{array}{cl} q^{(\dim G-\dim C)/2}
\lambda(a) & \quad\mbox{if $g=g_a$ for some $a\in A(g_1)$},\\ 0 & \quad
\mbox{if $g\not\in C^F$}.\end{array}\right.\]
Now, we can choose an isomorphism $F^*\cE \stackrel{\sim}{\rightarrow}\cE$ 
such that the induced map on the stalk $\cE_{g_1}$ is scalar multiplication 
by $q^{(\dim G-\dim C)/2}$. Then this isomorphism canonically induces an 
isomorphism $\phi_x\colon F^*A_x \stackrel{\sim}{\rightarrow} A_x$ which
satisfies the requirements in \ref{r15}, and we have $\chi_x=\chi_{A_x,
\phi_x}=\chi_{g_1,\lambda}$. (This follows from the fact that $A_x$ is 
``clean'' \cite{L10}, using the construction in \cite[19.7]{Ldisc4}.) With
this choice of $\phi_x$, we also have for all $m\geq 1$: 
\[ \chi_x^{(m)}(g_a^{(m)})=\left\{\begin{array}{cl} q^{m(\dim G-\dim C)/2}
\lambda(a) & \quad\mbox{if $g=g_a^{(m)}$ for some $a\in A(g_1)$},\\ 0 & 
\quad \mbox{if $g\not\in C^{F^m}$}.\end{array}\right.\]
(Here, $g_a^{(m)}=hg_1h^{-1}$ where now $h\in G$ is such that $h^{-1}F^m(h)
\in C_G(g_1)$ has image $a\in A(g_1)$; see again \cite[19.7]{Ldisc4}.) The 
identity in Definition~\ref{s1} now reads:
\[ R_x=\sum_{y\in X(W)} \delta_y\{y,x\}\rho_y=(-1)^{\dim C}
\zeta_x\chi_{g_1,\lambda}.\]
\end{rema}

\begin{rem} \label{rem1} Let $G$ be of (split) classical type. Then Shoji 
has shown that we always have $\zeta_x=1$ for cuspidal $A_x\in
\hat{G}^{\text{un}}$; see \cite[Prop.~6.7]{Sclass} for $p\neq 2$, and 
\cite[Theorem~6.2]{S7} for $p=2$. Note that this involves, in each case, 
the choice of a particular representative in the conjugacy class supporting 
$A_x$. Since classical groups of low rank appear as Levi subgroups in 
groups of exceptional type, it will be useful to work out explicitly the 
relevant identities $R_x=(-1)^{\dim C} \zeta_x\chi_{(g_1,\lambda)}$ for 
$G$ of type $C_2$, $D_4$ and $p=2$. This also provides a good illustration 
for: (a) the role of the choice of a class representative as above and (b) 
the strategy that we will employ when dealing with groups of exceptional
type. 
\end{rem}

\begin{exmp} \label{sp4} Let $G=\mbox{Sp}_4(k)$ be the $4$-dimensional 
symplectic group. Then $G=\langle x_\alpha(t)\mid \alpha\in \Phi, t\in k
\rangle$ with root system $\Phi=\{\pm a,\pm b,\pm (a+b),\pm (2a+b)\}$. The 
Weyl group $W=\langle s_a,s_b\rangle$ is dihedral of order~$8$ and we have
\[ \Irr(W)=\{1_W,\text{sgn},\text{sgn}_a,\text{sgn}_b,r\}\]
where $1_W$ is the trivial character, $\text{sgn}$ is the sign character,
$r$ has degree~$2$, and $\text{sgn}_a,\text{sgn}_b$ are linear characters 
such that $\text{sgn}_a(s_a)=1$, $\text{sgn}_a(s_b)=-1$, $\text{sgn}_b
(s_a)=-1$, $\text{sgn}_b(s_b)=1$. By \cite[p.~468]{C2}, we have 
\[X(W)=\{x_1, x_{\text{sgn}}, x_{\text{sgn}_a}, x_{\text{sgn}_b}, x_{r},
x_0\}\]
where $\rho_{1_W}(1)=1$, $\rho_{\text{sgn}}(1)=q^4$, $\rho_{\text{sgn}_a}
(1)=\rho_{\text{sgn}_b}(1)=\frac{1}{2}q(q^2+1)$, $\rho_{r}(1)=\frac{1}{2}
q(q+1)^2$ and $\rho_{x_0}(1)=\frac{1}{2}q(q-1)^2$. By the explicit 
description of the Fourier matrices in \cite[p.~471]{C2}, we find that 
\begin{center}
$R_{x_0}=\frac{1}{2}(\rho_{r}-\rho_{\text{sgn}_a}-\rho_{\text{sgn}_b}+
\rho_{x_0})$.
\end{center}
If $q$ is odd, then the identification of $R_{x_0}$ with a characteristic 
function of a cuspidal character sheaf is explained in the appendix of 
Srinivasan \cite{Sr94}. Now assume that $q=2^f$ where $f\geq 1$. Then, by 
\cite[22.2]{L2d}, there is a unique cuspidal character sheaf $A_0$ on $G$, 
and it is contained in $\hat{G}^{\text{un}}$. By the explicit description 
in \cite[2.7]{LuSp1}, we have $A_0=A_{x_0}$ and $A_0=\mbox{IC}(\overline{C},
\cE)[\dim C]$ where $C$ is the class of regular unipotent elements and 
$\cE\not\cong \overline{\Q}_\ell$; we have $\dim G=10$ and $\dim C=8$. Let 
us fix 
\[g_1=x_a(1)x_b(1) \in C^F.\]
One checks that $g_1$ has order $4$ and that $g_1$ is conjugate in $G^F$ 
to $g_1^{-1}=x_b(1)x_a(1)$. Furthermore, $A(g_1)\cong \Z/2\Z$ is abelian 
and $F$ acts trivially on $A(g_1)$. Let $\lambda$ be the non-trivial 
character of $A(g_1)$. Then, as in Example~\ref{exp2}, we obtain: 
\[\chi_{g_1,\lambda}(g)=\left\{\begin{array}{cl} q &\quad\mbox{if $g=g_1$},\\
-q & \quad \mbox{if $g=g_1'$},\\ 0 & \quad \mbox{if $g\not\in C^F$},
\end{array}\right.\]
where $g_1'\in C^F$ corresponds to the non-trivial element of $A(g_1)$.
We now have $R_{x_0}=\zeta_{x_0}\chi_{g_1,\lambda}$. In order
to determine $\zeta_{x_0}$ it is sufficient, by Remark~\ref{exp1}, to 
consider the case where~$q=2$. But $\mbox{Sp}_4(\F_2)$ is isomorphic 
to the symmetric group $\mathfrak{S}_6$; an explicit isomorphism is 
described in \cite[9.21]{HupI}. One checks that, under this isomorphism, 
$g_1=x_a(1) x_b(1)$ corresponds to an element of cycle type $(4,2)$ in 
$\mathfrak{S}_6$. We also need to identify $\rho_{r}$, $\rho_{\text{sgn}_a}$, 
$\rho_{\text{sgn}_b}$, $\rho_{x_0}$ in the character table of 
$\mathfrak{S}_6$. Now, $B(\F_2)$ is a Sylow $2$-subgroup of $\mbox{Sp}_4
(\F_2)$. Working out the character of the permutation representation on the 
cosets of this subgroup, one can identify the $5$ characters which are of 
the form $\rho_\epsilon$ for some $\epsilon\in \Irr(W)$. Looking also at 
character degrees, we can then identify $\rho_{\text{sgn}}$ and the sum 
$\rho_{\text{sgn}_a}+\rho_{\text{sgn}_b}$; finally, $\rho_{x_0}$ corresponds
to the sign character of $\mathfrak{S}_6$. By inspection of the table of 
$\mathfrak{S}_6$, we find that $R_{x_0}(g_1)=2$ (for $q=2$) and, hence, 
$\zeta_{x_0}=1$ for any $q=2^f$ (using Proposition~~\ref{p1}).~---~Of 
course, this could also be deduced from the explicit knowledge of the 
``generic'' character table of $G^F=\mbox{Sp}_4(\F_q)$ for any $q=2^f$ 
(see Enomoto \cite{En}). But the point is that, once $\zeta_{x_0}$ is known 
in advance, the task of computing such a generic character table is 
considerably simplified! 
\end{exmp}

\begin{exmp} \label{so8} Let $G=\mbox{SO}_8(k)$ be the $8$-dimensional 
special orthogonal group. The Weyl group $W$ has $13$ irreducible 
characters, which are labelled by certain pairs of partitions. By 
\cite[p.~471]{C2}, we have 
\[ X(W)=\{x_\epsilon\mid \epsilon\in\Irr(W)\} \cup\{x_0\}.\]
By the explicit description of the Fourier matrices in \cite[p.~472]{C2}, 
we find that 
\begin{center}
$R_{x_0}=\frac{1}{2}(\rho_{(21,1)}-\rho_{(22,\varnothing)}-
\rho_{(2,11)}+ \rho_{x_0})$;
\end{center}
here, $(1,21)$, $(\varnothing,22)$, $(11,2)$ indicate irreducible 
characters $\epsilon\in\Irr(W)$ (as in \cite[p.~449]{C2}). If $q$ is odd, 
then the values of $R_{x_0}$ are explicitly computed in 
\cite[Prop.~4.5]{GP1} (and this provides an identification of $R_{x_0}$ 
with a characteristic function of a cuspidal character sheaf). Now assume 
that $q=2^f$ where $f\geq 1$. Then, by \cite[22.3]{L2d}, there is a unique
cuspidal character sheaf $A_0$ on $G$, and it is contained in 
$\hat{G}^{\text{un}}$. Again, by the explicit description in 
\cite[3.3]{LuSp1}, we have $A_0=A_{x_0}=\mbox{IC}(\overline{C},\cE)
[\dim C]$ where $C$ is the class of regular unipotent elements and 
$\cE\not\cong \overline{\Q}_\ell$; we have $\dim G=28$ and $\dim C=24$. 
Let us fix 
\[g_1=x_a(1)x_b(1)x_c(1)x_d(1) \in C^F\]
where $\{a,b,c,d\}$ is a set of simple roots in the root system of type
$D_4$. One checks that $g_1$ has order $8$ and that, if $a',b',c',d'$ is any 
permutation of $a,b,c,d$, then $g_1$ is conjugate in $G^F$ to $x_{a'}(1)
x_{b'}(1)x_{c'}(1)x_{d'}(1)$. Furthermore, $A(g_1)\cong \Z/2\Z$ is abelian 
and $F$ acts trivially on $A(g_1)$. Let $\lambda$ be the non-trivial 
character of $A(g_1)$. As above, we obtain: 
\[\chi_{g_1,\lambda}(g)=\left\{\begin{array}{cl} q^2 &\quad\mbox{if 
$g=g_1$},\\ -q^2 & \quad \mbox{if $g=g_1'$},\\ 0 & \quad \mbox{if 
$g\not\in C^F$},
\end{array}\right.\]
where $g_1'\in C^F$ corresponds to the non-trivial element of $A(g_1)$.
Now we have $R_{x_0}=\zeta_{x_0}\chi_{g_1,\lambda}$. In order to show 
that $\zeta_{x_0}=1$, we can use the known character table of 
$\mbox{SO}_8^+(\F_2)$; see the {\sf ATLAS} \cite[p.~85]{atl}. In fact, using 
an explicit realization in terms of orthogonal $8\times 8$-matrices, one can
create $\mbox{SO}_8^+(\F_2)$ as a matrix group in {\sf GAP} and simply 
re-calculate that table using the {\tt CharacterTable} function. The 
advantage of this re-calculation is that {\sf GAP} also computes a list of 
representatives of the conjugacy classes of $\mbox{SO}_8^+(\F_2)$. So one 
can identify the class to which~$g_1$ belongs. Arguing as in the previous
example, one can identify the characters $\rho_{x_0}$, $\rho_{(21,1)}$ and 
the sum $\rho_{(22,\varnothing)}+\rho_{(2,11)}$ in the table of 
$\mbox{SO}_8^+(\F_2)$. (We omit the details.) In this way, one finds that 
$R_{x_0} (g_1)=4$ (for $q=2$), as required. 
\end{exmp}

\begin{rem} \label{rsplit} Assume that $Z(G)=\{1\}$, as above. Let $x\in 
X(W)$ be such that $A_x$ is cuspidal and let $C$ be the $F$-stable
conjugacy class of $G$ such that $\mbox{supp}(A_x)=\overline{C}$. The above 
examples highlight the importance of singling out a specific representative
$g_1\in C^F$ in order to determine a characteristic function $\chi_x$ of 
$A_x$ and the scalar~$\zeta_x$. This problem is, of course, not a new one. 
If $C$ is a unipotent class and $p$ is a good prime for $G$, there is a 
notion of ``split'' elements in $C^F$ which solves this problem in almost
all cases; see Shoji's survey \cite[\S 5]{S1}. Despite of much further work
(e.g., Shoji \cite{S6}), the question of finding general conditions which 
single out a distinguished representative $g_1 \in C^F$ appears to be open. 
In the above examples (and those below), we are able to choose a 
representative~$g_1\in C^F$ according to the following principles:
\begin{itemize}
\item $g_1$ belongs to $C^{F_0}=C(\F_p)$ (cf.\ Remark~\ref{exp1}) and is 
conjugate in $G(\F_p)$ to all powers $g_1^n$ where $n\in \Z$ is coprime to 
the order of~$g_1$.
\item The unipotent part of $g_1$ has a ``short'' expression in 
terms of the Chevalley generators $x_\alpha(t)$ of $G$ where $\alpha$ is 
a root and $t\in\F_p$. 
\end{itemize}
These principles also work in the further examples discussed in \cite{GeHe}.
\end{rem}
\section{Type $F_4$ in characteristic~$2$} \label{secF4}

Throughout this section, we assume that $G$ is simple of type $F_4$ and 
$p=2$. We have $G=\langle x_\alpha(t)\mid \alpha\in\Phi,t\in k\rangle$ where 
$\Phi$ is the root system of $G$ with respect to~$T$. Let $\{\alpha_1,\alpha_2,
\alpha_3,\alpha_4\}$ be the set of simple roots with respect to~$B$, where 
we choose the notation such that $\alpha_1,\alpha_2$ are long roots, 
$\alpha_3, \alpha_4$ are short roots, and $\alpha_2,\alpha_3$ are not 
orthogonal. (This coincides with the conventions in Shinoda \cite{Shi}, 
where the conjugacy classes of $G$ are determined.)
Also note that, since $p=2$, we do not have to worry about the precise 
choice of a Chevalley basis in the underlying Lie algebra. Our aim is
to determine the exact relation between characteristic functions of cuspidal 
unipotent character sheaves and almost characters in this case, using 
the approach illustrated in the previous section. For this purpose, we 
essentially rely on the knowledge of the character table of $F_4(\F_2)$; 
see the {\sf ATLAS} \cite[p.~167]{atl}. (The correctness of this table has 
been verified independently in \cite{veri}, and it is available in the 
library of {\sf GAP} \cite{gap}.) We shall also rely on a number of explicit 
computational results obtained through general algorithms and functions 
concerning matrix groups and character tables in {\sf GAP}.

\begin{rema} \label{r51} There are seven cuspidal characters sheaves 
$A_1,\ldots,A_7$ on $G$, described in detail by Shoji \cite[\S 7]{S2}
(based on earlier work of Lusztig and Spaltenstein). Firstly, they are 
all contained in $\hat{G}^{\text{un}}$; see \cite[7.6]{S2}. Secondly, as 
in \ref{exp2}, each $A_j$ corresponds to a pair $(g_1,\lambda)$ where 
$g_1\in G^F$ and $\lambda\in \Irr(A(g_1))$ is a non-trivial linear 
character of $A(g_1)$; see \cite[\S 7.2]{S2}. Here, in all seven cases,
$F$ acts trivially on $A(g_1)$. The correspondences $A_j \leftrightarrow 
(g_1,\lambda)$ are given as follows.
\begin{itemize}
\item[(a)] $A_j\leftrightarrow (u,\lambda_j)$ ($j=1,2$) where $u\in G^F$ 
is regular unipotent, $\dim C_G(u)=4$; furthermore, $A(u)\cong \Z/4\Z$ is 
generated by the image $\bar{u}\in A(u)$, and $\lambda_j$ are the linear 
characters such that $\lambda_1(\bar{u})=i$, $\lambda_2(\bar{u})=-i$, 
where $i=\sqrt{-1}$ is a fixed fourth root of unity.
\item[(b)] $A_3\leftrightarrow (u,\lambda)$ where $u\in G^F$ is
unipotent such that $\dim C_G(u)=6$ and $A(u)\cong \Z/2\Z$; furthermore,
$\lambda$ is the non-trivial character of $A(u)$.
\item[(c)] $A_4\leftrightarrow (u,\lambda)$ where $u\in G^F$ is
unipotent such that $\dim C_G(u)=8$ and $A(u)$ is isomorphic to the 
dihedral group $D_8$; furthermore, $\lambda$ corresponds to the sign 
character of $D_8$.
\item[(d)] $A_5\leftrightarrow (u,\lambda)$ where $u\in G^F$ is
unipotent such that $\dim C_G(u)=12$ and $A(u)$ is isomorphic to the 
symmetric group $\mathfrak{S}_3$; furthermore, $\lambda$ corresponds 
to the sign character of $\mathfrak{S}_3$.
\item[(e)] $A_j\leftrightarrow (su,\lambda_j)$ ($j=6,7)$ where $s\in G^F$ 
is semisimple, with $C_G(s)$ isogenous to $\mbox{SL}_3(k)\times \mbox{SL}_3
(k)$, and $u \in C_G(s)^F$ is regular unipotent; we have $\dim C_G(su)=4$. 
Furthermore, $A(su)\cong \Z/3\Z$ is generated by the image $\overline{su}
\in A(su)$, and $\lambda_j$ are linear characters such that $\lambda_6
(\overline{su})=\theta$, $\lambda_7(\overline{su})=\theta^2$, where 
$\theta$ is a fixed third root of unity.
\end{itemize}
(In each case (a)--(d), the conditions on $\dim C_G(u)$ and $A(u)$ uniquely
determine $g_1=u$ up to $G$-conjugacy; see Shinoda \cite[\S 2]{Shi}.
Furthermore, the class of $s$ in (e) is also uniquely determined such that
$C_G(s)$ has type $A_2\times A_2$; see \cite[\S 3]{Shi}.)
\end{rema}

\begin{table}[htbp] \caption{Supports of cuspidal character sheaves in
type $F_4$, $p=2$} \label{supp2}
\begin{center}
$\begin{array}{clcclc} \hline \text{In \ref{r51}} & 
\text{representative $g_1$}&A(g_1)&|C_G(g_1)^F| & \text{\sf GAP}\\ \hline 
\text{(a)}&u_{31}=x_{1000}(1)x_{0100}(1)x_{0010}(1)x_{0001}(1) & \Z/4\Z & 4q^4
& {\tt 16a} (76)\\ 
\text{(b)}&u_{29}=x_{0122}(1)x_{1000}(1)x_{0100}(1)x_{0010}(1) & \Z/2\Z & 
2q^6 & {\tt 8j} (47)\\
\text{(c)}&u_{24}=x_{1100}(1)x_{0120}(1)x_{0001}(1)x_{0011}(1) & D_8 & 8q^8 
& {\tt 8a} (38)\\
\text{(d)}&u_{17}=x_{1110}(1)x_{1220}(1)x_{0011}(1)x_{0122}(1) & 
\mathfrak{S}_3 &  6q^{12} & {\tt 4l} (20)\\ \text{(e)} & su_{17}=u_{17}s 
\;\text{ with $s\in G^F$ of order $3$} & \Z/3\Z & 3q^4 & {\tt 12o} (68)\\
\hline \multicolumn{5}{c}{\text{(Notation for $u_i$ as in Shinoda
\cite[\S 2]{Shi})}}\end{array}$
\end{center}
\end{table}

\begin{rema} \label{r52} Let $A_j$ be one of the cuspidal character sheaves
in \ref{r51}. In Table~\ref{supp2}, we fix a specific element $g_1\in G^F$ 
for the corresponding pair $(g_1,\lambda)$. The characteristic function 
$\chi_j:=\chi_{A_j,\phi_{A_j}}$ in \ref{exp2} will then be defined with 
respect to this choice of~$g_1$.

In the table, we use the following notation. If $\alpha$ is a positive root, 
written as $\alpha=\sum_{1\leq i\leq 4} n_i\alpha_i$, then we denote the 
corresponding root element $x_\alpha(t)$ by $x_{n_1n_2n_3n_4}(t)$. Some 
further comments about the entries of Table~\ref{supp2}.

\smallskip
(1) In the cases (a)--(d), $g_1$ is unipotent; let $C$ be the conjugacy class
of $g_1$ in $G$. Then we choose $g_1$ to be that representative of $C^F$ in 
Shinoda's list \cite[\S 2]{Shi}, which has an expression as a product of root
elements $x_\alpha(t)$ whose description does not involve the elements $\eta,
\zeta\in k$ defined in \cite[2.2]{Shi}. All the coefficients in the 
expression for $g_1$ are equal to~$1$, and so $g_1\in F_4(\F_2)$.

\smallskip
(2) Let $g_1\in F_4(\F_2)$ be as in (1). Then we will need to know to which
class of the {\sf GAP} character table of $F_4(\F_2)$ this element belongs.
This problem is easy for some cases, e.g., $g_1=u_{17}$ since there is a 
unique class in the {\sf GAP} table with centraliser order $6\cdot 2^{12}=
24576$. On the other hand, if $g_1=u_{31}$, then the classes with 
centraliser order $4\cdot 2^4=64$ are quite difficult to distinguish
in the {\sf GAP} table, especially the two classes {\tt 16a}, {\tt 16b}. 
In such cases, we use the following (computational) argument. Using the 
explicit $52$-dimensional matrix realization of $F_4(\F_2)$ from Lusztig 
\cite[2.3]{L12} (see also \cite[4.10]{mylie}), we create $F_4(\F_2)$ as a 
matrix group in {\sf GAP}. Then we consider the subgroup
\[P:=\langle x_{\pm 1000}(1), x_{\pm 0100}(1),x_{\pm 0010}(1), x_{0001}(1)
\rangle\subseteq F_4(\F_2).\]
The {\sf GAP} function 
{\tt CharacterTable} computes the character table of $P$ (by a general
algorithm, without using any specific properties of $P$), together with 
a list of representatives of the conjugacy classes. There are $214$ 
conjugacy classes of $P$ and, again by standard algorithms, {\sf GAP} 
can find out to which of these $214$ classes any given element of $P$ 
belongs. Now the function {\tt PossibleClassFusions} determines all possible 
fusions of the conjugacy classes of $P$ into the {\sf GAP} character table 
of $F_4(\F_2)$. (Here, a ``possible class fusion'' is a map which assigns 
to each conjugacy class of $P$ one of the $95$ class labels {\tt 1a}, 
{\tt 2a}, $\ldots$, {\tt 30b} of the {\sf GAP} table of $F_4(\F_2)$, such 
that certain conditions are satisfied which should hold if the map is a 
true matching of the classes of $P$ with those of the {\sf GAP} table;
see the help menu of {\tt PossibleClassFusions} for further details.) 
As might be expected (since there are non-trivial table automorphisms
of the character table of $F_4(\F_2)$), the class fusion is not unique; in 
fact, it turns out that there are $16$ possible fusion maps. But, if $g_1 
\in \{u_{17}, u_{24}, u_{29}, u_{31}\}$, then $g_1$ is mapped to the same 
class label of $F_4(\F_2)$, under each of the $16$ possibilities. Thus, the 
fusion of $g_1$ is uniquely determined, and this is the entry in the last 
column of Table~\ref{supp2}.

\smallskip
(3) Let again $g_1\in F_4(\F_2)$ be as in (1). Having identified the
class of $g_1$ in the {\sf GAP} table, we can simply check by inspection 
that $\chi(g_1) \in\Z$ for all $\chi\in\Irr(F_4(\F_2))$. Hence, $g_1$ is 
conjugate within $F_4(\F_2)$ to each power $g_1^n$ where $n$ is a positive 
integer coprime to the order of~$g_1$; in particular, $g_1$, $g_1^{-1}$ 
are conjugate in $F_4(\F_2)$. 

\smallskip
(4) Now let $g_1=su$ be as in case~(e). Let $C$ be the $G$-conjugacy class 
of~$su$. Since $s$ has order~$3$ and $u$ has order~$4$, the 
element $g_1=su$ has order~$12$; furthermore, $|C_G(g_1)^F|=3q^4$. All this 
also works for the base case where $q=2$ and so we can assume that $g_1 \in 
C(\F_2)\subseteq F_4(\F_2)$. Then $C(\F_2)$ splits into~$3$ classes in 
$F_4(\F_2)$; all elements in these three classes have order~$12$ and
centraliser order~$3\cdot 2^4=48$. By inspection of the {\sf GAP} table, we 
see that these three classes must be {\tt 12o}, {\tt 12p}, {\tt 12q}; here,
{\tt 12o} has the property that all character values on this class are 
integers. We now choose $g_1$ to be in {\tt 12o}; then $g_1$ is conjugate 
in $F_4(\F_2)$ to all powers $g_1^n$ where $n$ is a positive integer coprime 
to the order of~$g_1$. In particular, $g_1$ is conjugate in $G^F$ (even in 
$F_4(\F_2)$) to $g_1^5=s^{-1}u$ and, consequently, $s,s^{-1}$ are conjugate 
in $C_G(u)^F$. Now the {\sf GAP} table also shows that $g_1^3=u^3=u^{-1}$ 
belongs to {\tt 4l}. We have seen in (2), (3) that both $u_{17}$ and 
$u_{17}^{-1}$ belong to {\tt 4l}. Hence, we can assume that $u=u_{17}$. It 
then also follows from the above results (and Sylow's Theorem) that, if 
$t\in C_G(u)^F$ is any element of order~$3$, then~$tu$ is conjugate 
to~$su$ in $G^F$. 
\end{rema}

\begin{table}[htbp] \caption{Unipotent characters in the family $\cF_0$ for
type $F_4$, $q=2$} \label{uni2}
\begin{center}
$\begin{array}{crc} \hline \rho_x & \rho_x(1) & 
\text{{\sf GAP}} \\\hline 
    F_4^{II}[1]&       1326 &  X.5\\ 
    F_4^{I}[1]&       21658 &  X.7  \\ 
    F_4[-1]&          63700 &  X.13  \\ 
    \phi_{1,12}''&    99450 & X.14   \\ 
    \phi_{1,12}'&     99450 & X.15   \\ 
    \{F_4[i], F_4[-i]\}&      142884 & \{X.16,X.17\}   \\ 
    \{F_4[\theta],F_4[\theta^2]\}&   183600 & \{X.20,X.21\}   \\ 
    B_2:(\varnothing.2)&          216580 &  X.22  \\ 
    B_2:(11.\varnothing)&         216580 &  X.23  \\ 
\hline \multicolumn{3}{l}{\text{(Notation for $\rho_x$ as in Carter
\cite[p.~479]{C2})}} \end{array} \qquad\qquad \begin{array}{crc} \hline
\rho_x & \rho_x(1) & \text{{\sf GAP}}\\\hline
    \phi_{6,6}''&     249900 &  X.24  \\ 
    B_2:(1.1)&          269892 &  X.25  \\ 
    \phi_{4,8}&       322218 &  X.27 \\ 
    \phi_{4,7}''&     358020 &  X.30  \\ 
    \phi_{4,7}'&      358020 &  X.31 \\ 
    \phi_{6,6}'&      519792 &  X.32  \\ 
    \phi_{9,6}''&     541450 &  X.33 \\ 
    \phi_{9,6}'&      541450 &  X.34 \\ 
    \phi_{12,4}&      584766 &  X.37 \\ 
    \phi_{16,5}&      947700 &  X.44 \\ 
\hline \end{array}$
\end{center}
\end{table}

\begin{rema} \label{r53} Let us write $\fU(G^F)=\{\rho_x\mid x\in 
X(W)\}$ and $\hat{G}^{\text{un}}=\{A_x \mid x \in X(W)\}$ as in \ref{r18}. 
Here, the parameter set $X(W)$ has $37$ elements. It is partitioned into 
$11$ ``families'', which correspond to the special characters of $W$, as 
defined in \cite[4.1, 4.2]{L1}. The corresponding Fourier matrix has a 
block diagonal shape, with one diagonal block for each family. The $7$ 
elements of $X(W)$ which label cuspidal character sheaves are all 
contained in one family, which we denote by $\cF_0$ and which consists 
of $21$ elements in total. The elements of $\cF_0$ are given by the $21$ 
pairs $(u,\sigma)$ where $u\in\fS_4$ (up to $\fS_4$-conjugacy) and 
$\sigma\in\Irr(C_{\fS_4}(u))$. The diagonal block of the Fourier matrix
corresponding to $\cF_0$ is then described in terms of $\fS_4$ and the 
pairs $(u,\sigma)$; see \cite[\S 13.6]{C2} for further details. The 
identification of the $21$ unipotent characters $\{\rho_x\mid x\in\cF_0\}$ 
with characters in the {\sf GAP} table of $F_4(\F_2)$ is given in 
Table~\ref{uni2}. (This information is available through the {\sf GAP} 
function {\tt DeligneLusztigNames}; note that this depends on fixing a
labelling of the long and short roots.) The results of Shoji 
\cite[Theorem 7.5]{S2} show that the cuspidal character sheaves $A_j$ in 
\ref{r51} are labelled as follows by pairs in $\cF_0$:
\begin{gather*}
A_1=A_{(g_4,i)}, \qquad A_2=A_{(g_4,-i)}, \qquad
A_3=A_{(g_2,\epsilon)},\\ \{A_4,A_5\}=\{A_{(1,\lambda^3)},A_{(g_2',
\varepsilon)}\},\qquad A_6=A_{(g_3,\theta)},\qquad A_7=A_{(g_3,\theta^2)}.
\end{gather*}
Here, we follow the notation in \cite[p.~455]{C2} for pairs in $\cF_0$. (Note
that, a priori, the statement of Theorem~\ref{r16} alone does not tell us 
anything about the supporting set of a cuspidal character sheaf $A_x$!)
Let us consider in detail $A_6,A_7$. (The remaining cases have been
dealt with already by Marcelo--Shinoda \cite{MaSh}; see Remark~\ref{mash}
below.) Using the $21\times 21$-times Fourier matrix printed in 
\cite[p.~456]{C2} and the labelling of the unipotent characters $\{\rho_x
\mid x \in \cF_0\}$ in terms of pairs $(u,\sigma)$ as above (see 
\cite[p.~479]{C2}), we obtain the following formulae for the unipotent 
almost characters labelled by $(g_3,\theta)$ and $(g_3,\theta^2)$:
\begin{align*}
R_{(g_3,\theta)} & =\textstyle\frac{1}{3}\bigl(\rho_{(12,4)}+F_4^{II}[1]-
\rho_{(6,6)'}-\rho_{(6,6)''}+ 2F_4[\theta]-F_4[\theta^2]\bigr),\\
R_{(g_3,\theta^2)} & =\textstyle\frac{1}{3}\bigl(\rho_{(12,4)}+F_4^{II}[1]-
\rho_{(6,6)'}-\rho_{(6,6)''} -F_4[\theta]+2F_4[\theta^2]\bigr).
\end{align*}
By \ref{r52} and Theorem~\ref{r16}, we have $R_{(g_3,\theta)}=
\zeta_{(g_3,\theta)}\chi_6$ and $R_{(g_3,\theta^2)}=\zeta_{(g_3,\theta^2)}
\chi_7$. 
\end{rema}

\begin{prop} \label{p2} With notation as in \ref{r51}--\ref{r53}, we have
$\zeta_{(g_3,\theta)}=\zeta_{(g_3,\theta^2)}=1$; hence, $R_{(g_3,\theta)}=
\chi_6$ and $R_{(g_3,\theta^2)}=\chi_7$.
\end{prop}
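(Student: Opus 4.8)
The plan is to reduce the computation of $\zeta_{(g_3,\theta)}$ and $\zeta_{(g_3,\theta^2)}$ to an explicit evaluation inside the fixed finite group $F_4(\F_2)$, using Proposition~\ref{p1} (Remark~\ref{exp1}): it suffices to verify that the corresponding scalars for $q=2$ equal $1$, and then all powers $q=2^f$ follow. So I fix $q=2$ throughout. From \ref{r52}, \ref{exp2}, and Table~\ref{supp2}, the characteristic function $\chi_6$ (resp.\ $\chi_7$) is supported on the class of $g_1=su_{17}$ of order $12$ fusing to the {\sf GAP} class {\tt 12o} of $F_4(\F_2)$, with $A(g_1)\cong\Z/3\Z$ acting via $F$ trivially; explicitly $\chi_j(g_a)=q^{(\dim G-\dim C)/2}\lambda_j(a)=2\cdot\lambda_j(a)$ for $a\in A(g_1)$ (since $\dim G-\dim C=52-48=4$), and $\chi_j$ vanishes off $C^F$. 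The three classes into which $C(\F_2)$ splits in $F_4(\F_2)$ are {\tt 12o}, {\tt 12p}, {\tt 12q}, related by the Galois action $\theta\mapsto\theta^2$; the element $g_a$ with $\lambda_6(a)=\theta$ lies in one of {\tt 12p}, {\tt 12q}.

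The key computation is then to evaluate the almost character $R_{(g_3,\theta)}=\frac13\bigl(\rho_{(12,4)}+F_4^{II}[1]-\rho_{(6,6)'}-\rho_{(6,6)''}+2F_4[\theta]-F_4[\theta^2]\bigr)$ on $g_1$ and on the other two order-$12$ classes in $C(\F_2)$, using the identifications of the relevant unipotent characters with entries of the {\sf GAP} character table of $F_4(\F_2)$ recorded in Table~\ref{uni2}. Concretely, $\rho_{(12,4)}=\phi_{12,4}=X.37$, $F_4^{II}[1]=X.5$, $\{\rho_{(6,6)'},\rho_{(6,6)''}\}=\{X.32,X.24\}$, and $\{F_4[\theta],F_4[\theta^2]\}=\{X.20,X.21\}$ (the two members of this cuspidal pair being swapped by the table automorphism, just as $\theta\leftrightarrow\theta^2$). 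Reading off these eight character values on the classes {\tt 12o}, {\tt 12p}, {\tt 12q} and forming the combination, one should find that $R_{(g_3,\theta)}$ takes the value $0$ on {\tt 12o} and values $2\theta$, $2\theta^2$ (in some order) on {\tt 12p}, {\tt 12q}; comparing with the shape of $\chi_6$ above forces $\zeta_{(g_3,\theta)}=1$, and the analogous reading for $R_{(g_3,\theta^2)}$ gives $\zeta_{(g_3,\theta^2)}=1$. Finally I invoke Remark~\ref{exp1}/Proposition~\ref{p1} to conclude $\zeta_{(g_3,\theta)}=\zeta_{(g_3,\theta^2)}=1$ for all $q=2^f$, whence $R_{(g_3,\theta)}=\chi_6$ and $R_{(g_3,\theta^2)}=\chi_7$.

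The delicate point — the main obstacle — is the bookkeeping of compatible labellings: one must match (i) Carter's pair notation $(g_3,\theta)$ for the family $\cF_0$ with the $21\times21$ Fourier matrix on p.~456 of \cite{C2}, (ii) the {\sf GAP} names $X.k$ for the unipotent characters via {\tt DeligneLusztigNames} (which depends on a fixed choice of long vs.\ short roots, as noted in \ref{r53}), and (iii) the fusion of the Shinoda representative $su_{17}$ into the {\sf GAP} class list, which was settled in \ref{r52}(4) through {\tt PossibleClassFusions}. One also has to track the Galois/table-automorphism action consistently on all three of $\{F_4[\theta],F_4[\theta^2]\}$, $\{X.20,X.21\}$, and $\{\texttt{12p},\texttt{12q}\}$, so that ``$\theta$'' means the same primitive cube root of unity everywhere; an inconsistent choice would only interchange the roles of $A_6$ and $A_7$ and hence does not affect the final conclusion that both scalars are $1$, but it must be checked that no spurious scalar $\neq1$ can arise. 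Since all character values of the relevant irreducibles of $F_4(\F_2)$ on these classes are cyclotomic integers in $\Z[\theta]$ and the coefficients of $\chi_6$ are prescribed, the only possible value of $\zeta_{(g_3,\theta)}$ consistent with $\langle\chi_6,\chi_6\rangle=1=\langle R_{(g_3,\theta)},R_{(g_3,\theta)}\rangle$ and with the explicit table entries is $1$, and this rigidity is what makes the verification conclusive rather than merely numerical.
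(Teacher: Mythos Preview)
Your overall strategy coincides with the paper's: reduce to $q=2$ via Proposition~\ref{p1}, realise $R_{(g_3,\theta)}$ and $R_{(g_3,\theta^2)}$ as explicit linear combinations of irreducible characters in the {\sf GAP} table of $F_4(\F_2)$ using Table~\ref{uni2}, and compare the resulting values on the class $C^F$ of $g_1=su_{17}$ with the known shape of $\chi_6$, $\chi_7$. The bookkeeping concerns you raise about labellings are genuine and are precisely what \ref{r52}--\ref{r53} address.

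However, your concrete numerical predictions are wrong, so the comparison step as you describe it would not go through. First a minor slip: from $\dim G-\dim C=4$ you correctly get the exponent $(\dim G-\dim C)/2=2$, but for $q=2$ this gives $q^2=4$, not~$2$. More seriously, your assertion that ``one should find'' $R_{(g_3,\theta)}$ takes the value~$0$ on {\tt 12o} contradicts your own setup: you place $g_1$ in {\tt 12o}, and $g_1$ corresponds to the identity element of $A(g_1)$, so $\chi_6(g_1)=q^{2}\lambda_6(1)=q^{2}\neq 0$; since $R_{(g_3,\theta)}=\zeta_{(g_3,\theta)}\chi_6$ with $\zeta_{(g_3,\theta)}\neq 0$, the value on {\tt 12o} is necessarily a nonzero integer, and equating it with $\chi_6(g_1)$ is exactly what pins down~$\zeta_{(g_3,\theta)}$. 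The paper in fact evaluates \emph{only} on {\tt 12o}: there $X.20$ and $X.21$ (being algebraically conjugate) take the same integer value, so the ambiguity in matching $\{F_4[\theta],F_4[\theta^2]\}$ with $\{X.20,X.21\}$ is irrelevant, and a single comparison at~$g_1$ already yields $\zeta_{(g_3,\theta)}=\zeta_{(g_3,\theta^2)}=1$. There is no need to track the values on {\tt 12p}, {\tt 12q} or to align the three Galois ambiguities you list.
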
 

\begin{proof} Let $C$ be the conjugacy class of $g_1=su_{17}$ in $G$ (cf.\ 
Table~\ref{supp2}). Recall from \ref{r52} that the values of $\chi_6$ and 
$\chi_7$ on $C^F$ are given as follows:
\[\chi_6(g)= \left\{\begin{array}{cl} q^4 & \mbox{ (if $g=g_1$)}\\
 q^4 \theta& \mbox{ (if $g=g_{\bar{g}_1}$)}\\ q^4\theta^2 & \mbox{ (if $g=
g_{\bar{g}_1^2}$)}\end{array}\right.
\quad \mbox{and}\qquad \chi_7(g)= \left\{\begin{array}{cl} q^4 & \mbox{ (if 
$g=g_1$)}\\ q^4 \theta^2& \mbox{ (if $g=g_{\bar{g}_1}$)}\\ q^4\theta & 
\mbox{ (if $g=g_{\bar{g}_1^2}$)} \end{array}\right.\]
In order to prove that $\zeta_{(g_3,\theta)}=\zeta_{(g_3,\theta^2)}=1$ it
is sufficient, by Proposition~\ref{p1}, to consider the base case where 
$q=2$. So, in the {\sf GAP} table of $F_4(\F_2)$, we form the above linear 
combinations of the irreducible characters, giving $R_{(g_3,\theta)}$ and 
$R_{(g_3,\theta^2)}$ for the group $F_4(\F_2)$. We find that these linear 
combinations have value $16$ on the class {\tt 12o}, which contains 
$g_1=su_{17}\in C^F$. (Note that this does not depend on how we match 
$F_4[\theta]$, $F_4[\theta^2]$ with {\tt X.20}, {\tt X.21} in the 
{\sf GAP} table, since {\tt X.20} and {\tt X.21} are algebraically 
conjugate and all values on {\tt 12o} are integers.) Comparing with 
the above formulae we see that $\zeta_{(g_3,\theta)}= 
\zeta_{(g^3,\theta^2)}=1$, as desired.
\end{proof}

\begin{rem} \label{mash} The remaining $A_1,\ldots,A_5$ have already been 
dealt with by Marcelo--Shinoda \cite[Theorem~4.1]{MaSh}, by completely 
different (and, in our opinion, computationally somewhat more complicated) 
methods. In particular, they showed that 
\[ A_4=A_{(g_3',\varepsilon)}, \quad A_5=A_{(1,\lambda^3)} \quad 
\mbox{and} \quad \zeta_{A_1}= \zeta_{A_2}= \zeta_{A_3}= \zeta_{A_4}=
\zeta_{A_5}=1.\]
In our setting, this can also be shown by exactly the same kind of argument 
as in the proof of Proposition~\ref{p2}. 
\end{rem}

\section{Type $E_6$ in characteristic~$2$} \label{secE6}

Throughout this section, we assume that $G$ is simple of adjoint type 
$E_6$ and $p=2$. We have $G=\langle x_\alpha(t)\mid \alpha\in\Phi,t\in 
k\rangle$ where $\Phi$ is the root system of $G$ with respect to~$T$. Let 
$\{\alpha_1,\alpha_2, \alpha_3,\alpha_4,\alpha_5,\alpha_6\}$ be the set 
of simple roots with respect to~$B$, where the labelling is chosen as
in the following diagram:
\begin{center}
\begin{picture}(270,53)
\put( 10,25){$E_6$}
\put(132, 3){$\alpha_2$}
\put( 61,43){$\alpha_1$}
\put( 91,43){$\alpha_3$}
\put(121,43){$\alpha_4$}
\put(151,43){$\alpha_5$}
\put(181,43){$\alpha_6$}
\put(125, 5){\circle*{5}}
\put( 65,35){\circle*{5}}
\put( 95,35){\circle*{5}}
\put(125,35){\circle*{5}}
\put(155,35){\circle*{5}}
\put(185,35){\circle*{5}}
\put(125,35){\line(0,-1){30}}
\put( 65,35){\line(1,0){30}}
\put( 95,35){\line(1,0){30}}
\put(125,35){\line(1,0){30}}
\put(155,35){\line(1,0){30}}
\end{picture}
\end{center}
Again, since $p=2$, we do not have to worry about the precise choice of a 
Chevalley basis in the underlying Lie algebra. Further note that $G(\F_2)$ is
a simple group; it will just be denoted by $E_6(\F_2)$. (For $q$ an even 
power of~$2$, the group $G(\F_q)$ has a normal subgroup of index~$3$). Now 
we essentially rely on the knowledge of the character table of $E_6(\F_2)$, 
which has been determined by B.~Fischer and is contained in the {\sf GAP} 
library~\cite{bre}. (We have $|\Irr(E_6(\F_2))|=180$.) Although the group is
bigger, the further discussion will actually be simpler than that for type 
$F_4$ in the previous section, since there are fewer cuspidal unipotent 
character sheaves.

\begin{rema} \label{r61} By Lusztig \cite[20.3]{L2d} and Shoji 
\cite[4.6]{S3}, there are two cuspidal character sheaves $A_1,A_2\in
\hat{G}^{\text{un}}$. Both $A_1$, $A_2$ have the same support, namely,
the closure of the $G$-conjugacy class $C$ of an element $g_1=su\in G^F$ 
such that $s\in G^F$ is semisimple, with $C_G(s)^\circ$ isogneous to 
$\mbox{SL}_3(k)\times \mbox{SL}_3(k)\times\mbox{SL}_3(k)$, and $u \in 
C_G^\circ(s)^F$ is regular unipotent. Furthermore, $\dim C_G(g_1)=6$ and 
$A(g_1)\cong C_3\times C_3'$ where $C_3, C_3'$ are cyclic of order~$3$ 
and $C_3$ is generated by the image $\overline{g}_1\in A(g_1)$. Finally,
$A_j\leftrightarrow (g_1,\lambda_j)$ where $\lambda_j$ are linear characters 
which are trivial on $C_3'$ and such that $\lambda_1(\overline{g}_1)=
\theta$, $\lambda_2(\overline{g}_1)=\theta^2$ (where, again, $\theta$ is a
fixed third root of unity). Once we fix a specific element $g_1\in C^F$, we 
obtain well-defined characteristic functions 
\[\chi_j:=\chi_{g_1,\lambda_j}=\chi_{A_j,\phi_{A_j}} \qquad \mbox{for 
$j=1,2$ (cf.\ \ref{exp2})}.\]
\end{rema}

\begin{table}[htbp] \caption{Unipotent characters in the family $\cF_0$ for
type $E_6$, $q=2$} \label{uni2a}
\begin{center}
$\begin{array}{crc} \hline \rho_x & \rho_x(1) & \text{{\sf GAP}} \\\hline 
\{E_6[\theta],E_6[\theta^2]\}&   45532800& \{X.14,X.15\}   \\ 
D_4,r  &120645056 & X.18 \\
(20,10) & 184660800 & X.20 \\
(10,9) & 192047232 & X.22\\
(60,8) & 800196800 & X.29\\
(80,7) & 864212544 & X.30\\
(90,8) & 902358912 & X.31\\
\hline \multicolumn{3}{l}{\text{(Notation for $\rho_x$ as in Carter
\cite[p.~480]{C2})}} \end{array}$
\end{center}
\end{table}

\begin{rema} \label{r62} Let us write $\fU(G^F)=\{\rho_x\mid x\in 
X(W)\}$ and $\hat{G}^{\text{un}}=\{A_x \mid x \in X(W)\}$ as in \ref{r18}. 
Here, the parameter set $X(W)$ has $30$ elements, and it is partitioned into 
$17$ families. The $2$ elements of $X(W)$ which label cuspidal character 
sheaves are all contained in one family, which we denote by $\cF_0$ and 
which consists of $8$ elements in total. The elements of $\cF_0$ are given 
by the $8$ pairs $(u,\sigma)$ where $u\in\fS_3$ (up to $\fS_3$-conjugacy) and 
$\sigma\in\Irr(C_{\fS_3}(u))$. The diagonal block of the Fourier matrix
corresponding to $\cF_0$ is then described in terms of $\fS_3$ and the 
pairs $(u,\sigma)$; see \cite[\S 13.6]{C2} for further details. The 
identification of the $8$ unipotent characters $\{\rho_x\mid x\in\cF_0\}$ 
with characters in the {\sf GAP} table of $E_6(\F_2)$ is given in 
Table~\ref{uni2a}. (These characters are uniquely determined by their
degrees, except for the two cuspidal unipotent characters $E_6[\theta]$,
$E_6[\theta^2]$ which have the same degree.) The results of Shoji 
\cite[4.6, 5.2]{S3} show that the cuspidal character sheaves $A_j$ in 
\ref{r61} are labelled as follows by pairs in $\cF_0$:
\[ A_1=A_{(g_3,\theta)}\qquad \mbox{and} \qquad A_2=A_{(g_3,\theta^2)},\]
where we follow again the notation in \cite[p.~455]{C2} for pairs in 
$\cF_0$. Using the $8\times 8$-times Fourier matrix printed in 
\cite[p.~457]{C2} and the labelling of the unipotent characters $\{\rho_x 
\mid x \in \cF_0\}$ in terms of pairs $(u,\sigma)$ as above (see
\cite[p.~480]{C2}), we obtain the unipotent almost characters labelled 
by $(g_3,\theta)$ and $(g_3,\theta^2)$:
\begin{align*}
R_{(g_3,\theta)} & =\textstyle\frac{1}{3}\bigl(\rho_{(80,7)}+\rho_{(20,10)}
-\rho_{(10,9)}-\rho_{(90,8)}+2E_6[\theta]-E_6[\theta^2]\bigr),\\
R_{(g_3,\theta^2)} & =\textstyle\frac{1}{3}\bigl(\rho_{(80,7)}+\rho_{(20,10)}
-\rho_{(10,9)}-\rho_{(90,8)}-E_6[\theta]+2E_6[\theta^2]\bigr).
\end{align*}
\end{rema}

\begin{rema} \label{r62a} Let $C$ be the $F$-stable conjugacy class of $G$ 
such that $\mbox{supp}(A_j)=\overline{C}$ for $j=1,2$ (see \ref{r61}). 
We now fix a specific representative $g_1\in C^F$, as follows. Using the 
{\sf GAP} character table of $E_6(\F_2)$ and the information in 
Table~\ref{uni2a}, we find (for $q=2$) that $\{R_{(g_3,\theta)}, R_{(g_3, 
\theta^2)}\}=\{f_1,f_2\}$ where $f_1,f_2 \colon E_6(\F_2)\rightarrow 
\overline{\Q}_\ell$ are class functions with non-zero values as follows: 
\[ \begin{array}{cccc} \hline {\sf GAP:} & {\tt 12n} \,\text{(no.~60)} & 
{\tt 12o} \,\text{(no.~61)} & {\tt 12p} \, \text{(no.~62)} \\ \hline f_1 
& 8 & 8\theta & 8\theta^2\\f_2 & 8 & 8\theta^2 & 8\theta\\\hline\end{array}\]
Since $\chi_j$ only has non-zero values on elements in $C^F$, we conclude 
that $C$ must contain the elements of $E_6(\F_2)$ which belong to the three 
classes {\tt 12n}, {\tt 12o}, {\tt 12p} in the {\sf GAP} table; these three 
classes have centraliser order $192=3\cdot 2^6$. By inspection of the 
{\sf GAP} table, all character values on {\tt 12n} are integers, while the 
character values on {\tt 12o} are complex conjugates of those on {\tt 12p}. 
Thus, we choose $g_1\in C(\F_2)\subseteq C^F$ to be from the class {\tt 12n}
in $E_6(\F_2)$. Then $g_1$ is conjugate in $E_6(\F_2)$ to all powers
$g_1^n$ where $n$ is a positive integer coprime to the order of~$g_1$. 
Once $g_1$ is fixed, we obtain characteristic functions $\chi_1,\chi_2$ as
in \ref{r61}. Then Theorem~\ref{r16} shows that $R_{(g_3,\theta)}=
\zeta_{(g_3,\theta)} \chi_1$ and $R_{(g_3,\theta^2)}=\zeta_{(g_3,\theta^2)}
\chi_2$.
\end{rema}

\begin{rema} \label{r62b} Let $g_1\in C(\F_2)$ be as in \ref{r62a}. 
We write $g_1=su=us$ where $s\in E_6(\F_2)$ has order~$3$ and $u\in
E_6(\F_2)$ is unipotent. As in \ref{r52}(4) one sees that all such 
elements~$s$ of order~$3$ are conjugate in $E_6(\F_2)$. We now identify 
the unipotent part~$u$ of $g_1$, where we use the results and the notation
of Mizuno \cite{Miz}. Let 
\begin{equation*}
u_{15}:=x_{\alpha_2}(1)x_{\alpha_3}(1)x_{\alpha_4}(1)x_{\alpha_2+\alpha_4+
\alpha_5}(1)\in E_6(\F_2),\tag{a}
\end{equation*} 
as in \cite[Lemma~4.3]{Miz}; in particular, we have 
\begin{equation*}
A(u_{15})\cong \mathfrak{S}_3 \qquad \mbox{and} \qquad |C_G(u_{15})^F|=
6q^{18}(q-1)^2.\tag{b}
\end{equation*} 
We claim that the unipotent part $u$ of $g_1$ is conjugate in $E_6(\F_2)$
to $u_{15}$. To see this, we follow a similar approach as in \ref{r52}. 
Using the explicit $78$-dimensional matrix realization of $E_6(\F_2)$ from 
Lusztig \cite[2.3]{L12} (see also \cite[4.10]{mylie}), we create $E_6(\F_2)$ 
as a matrix group in {\sf GAP}. We consider the subgroup 
\[L:=\langle x_{\pm \alpha_2}(1),x_{\pm \alpha_3}(1),x_{\pm \alpha_4}(1), 
x_{\pm \alpha_5}(1) \rangle \subseteq E_6(\F_2);\]
note that $u_{15}\in L$. The group $L$ is simple of order $174182400$. (In 
fact, $L \cong  \mbox{SO}_8^+(\F_2)$.) As in Example~\ref{so8}, the 
{\sf GAP} function {\tt CharacterTable} computes the character table of $L$ 
(by a general algorithm), together with a list of representatives of the 
conjugacy classes; there are $53$ conjugacy classes of $L$. One notices that 
there is a unique class of $L$ in which the elements have centraliser 
order~$64$, and one checks using {\sf GAP} that our element~$u_{15}$ belongs 
to this class. Now, as in \ref{r52}, the function {\tt PossibleClassFusions} 
determines all possible fusions of the conjugacy classes of $L$ into the 
{\sf GAP} table of $E_6(\F_2)$. It turns out that there are $2$ possible 
fusion maps. But under each of these two possibilities, the class 
containing our element~$u_{15}$ is mapped to the class {\tt 4k} (no.~18) 
in the {\sf GAP} table of $E_6(\F_2)$. Thus, $u_{15}$ belongs to the class 
{\tt 4k} in {\sf GAP}. According to the {\sf GAP} table, the element 
$g_1^3=u^{-1}$ also belongs to the class {\tt 4k}; furthermore, all 
character values on {\tt 4k} are integers. Hence, $u$, $u^{-1}$, $u_{15}$ 
are all conjugate in $E_6(\F_2)$, as claimed. 
\end{rema}

To summarize the above discussion, we can assume that the chosen 
representative $g_1\in C^F$ in \ref{r62a} is as follows. We have $g_1=
us=su\in E_6(\F_2)$ where $u=u_{15}$ (see \ref{r62b}(a)) and $s\in 
E_6(\F_2)$ is any element of order~$3$ such that $su=us$.

\begin{prop} \label{r63} Let $g_1=su_{15}\in E_6(\F_2)\subseteq G^F$ be as
above. For $j=1,2$, define $\chi_j=\chi_{(g_1,\lambda_j)} \colon G^F
\rightarrow \overline{\Q}_\ell$ as in Example~\ref{exp2}, with $\lambda_j$ 
as in \ref{r61}. Then $\zeta_{(g_3, \theta)}=\zeta_{(g_3,\theta^2)}=1$;
hence, $R_{(g_3,\theta)}=\chi_1$ and $R_{(g_3,\theta^2)}=\chi_2$. 
\end{prop}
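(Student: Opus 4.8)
The plan is to follow exactly the pattern used in the proof of Proposition~\ref{p2}, now applied to type $E_6$. By Proposition~\ref{p1}, it suffices to verify the equalities $\zeta_{(g_3,\theta)}=\zeta_{(g_3,\theta^2)}=1$ in the base case $q=2$, i.e.\ for the finite group $E_6(\F_2)$. All the ingredients have been assembled in \ref{r61}--\ref{r62b}: we have the explicit expressions for the almost characters $R_{(g_3,\theta)}$ and $R_{(g_3,\theta^2)}$ as rational linear combinations of the seven unipotent characters in the family $\cF_0$ (listed in Table~\ref{uni2a}), we have identified these seven characters with concrete entries {\tt X.14},\dots,{\tt X.31} in the {\sf GAP} table of $E_6(\F_2)$, and we have pinned down the class of the chosen representative $g_1=su_{15}$ to be {\tt 12n} (no.~60).

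First I would form, inside the {\sf GAP} character table of $E_6(\F_2)$, the two linear combinations
\[ \textstyle\frac{1}{3}\bigl(\rho_{(80,7)}+\rho_{(20,10)}-\rho_{(10,9)}-\rho_{(90,8)}+2E_6[\theta]-E_6[\theta^2]\bigr) \]
and the analogous one with $\theta$ and $\theta^2$ interchanged, using the identifications in Table~\ref{uni2a}. Next I would evaluate these two class functions on the class {\tt 12n}. Since all irreducible character values on {\tt 12n} are integers (noted in \ref{r62a}), the outcome does not depend on how one matches $E_6[\theta]$, $E_6[\theta^2]$ with {\tt X.14}, {\tt X.15}, because those two characters are algebraically conjugate; in particular the combination has a well-defined integer value on {\tt 12n}. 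I expect this value to be $8$, matching the value $\chi_1(g_1)=\chi_2(g_1)=q^4=16$ would give at $q=2$ — wait, more precisely: at $q=2$ we have $q^{(\dim G-\dim C)/2}=q^{\dim C_G(g_1)/2}=q^{3}=8$ by \ref{r61} (since $\dim C_G(g_1)=6$), so $\chi_j(g_1)=8$, and therefore finding the value $8$ for both $R_{(g_3,\theta)}(g_1)$ and $R_{(g_3,\theta^2)}(g_1)$ yields $\zeta_{(g_3,\theta)}=\zeta_{(g_3,\theta^2)}=1$.

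Then I would conclude: by \ref{r62a} we know $R_{(g_3,\theta)}=\zeta_{(g_3,\theta)}\chi_1$ and $R_{(g_3,\theta^2)}=\zeta_{(g_3,\theta^2)}\chi_2$; comparing values on the class {\tt 12n} of $g_1$ gives $\zeta_{(g_3,\theta)}\cdot 8 = 8$ and likewise for $\zeta_{(g_3,\theta^2)}$, so both scalars equal $1$ for $q=2$. By Proposition~\ref{p1}, $\zeta_{(g_3,\theta)}^{(f)}=\zeta_{(g_3,\theta)}^f=1$ and similarly for $\zeta_{(g_3,\theta^2)}$, for every $q=2^f$. Hence $R_{(g_3,\theta)}=\chi_1$ and $R_{(g_3,\theta^2)}=\chi_2$ in all cases.

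The main obstacle here is not the final computation — which is a one-line check in {\sf GAP} once the data is in place — but the bookkeeping that precedes it, all of which has already been carried out in \ref{r61}--\ref{r62b}: correctly identifying the seven members of $\cF_0$ in Fischer's table, and unambiguously locating the class of $g_1=su_{15}$. The only subtlety worth flagging in the write-up is the potential ambiguity in labelling $E_6[\theta]$ versus $E_6[\theta^2]$ and {\tt 12n} versus the complex-conjugate pair {\tt 12o}, {\tt 12p}; both are resolved by the observation that {\tt 12n} carries only integer character values, which makes the relevant evaluation insensitive to the choice and simultaneously forces $g_1$ into {\tt 12n} (rather than {\tt 12o} or {\tt 12p}) once we demand that $g_1$ be conjugate to all its coprime powers, as in the selection principle of Remark~\ref{rsplit}.
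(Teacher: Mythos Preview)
Your proposal is correct and follows essentially the same approach as the paper: reduce to $q=2$ via Proposition~\ref{p1}, evaluate the almost characters on the class {\tt 12n} using the identifications already established in \ref{r61}--\ref{r62b}, and compare with $\chi_j(g_1)=q^3=8$ to conclude $\zeta_{(g_3,\theta)}=\zeta_{(g_3,\theta^2)}=1$. The paper's write-up is slightly terser because the explicit evaluation (yielding the value~$8$ on {\tt 12n}) was already recorded in \ref{r62a}, so the proof merely points back to that computation; your exposition spells out the same steps, including the handling of the $E_6[\theta]\leftrightarrow E_6[\theta^2]$ ambiguity, in a bit more detail.
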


\begin{proof} Recall from \ref{r62} that $R_{(g_3,\theta)}=\zeta_{(g_3,
\theta)}\chi_1$ and $R_{(g_3,\theta^2)}=\zeta_{(g_3,\theta^2)} \chi_2$. 
Now it will be sufficient to show that the scalars $\zeta_{(g_3,\theta)}$ 
and $\zeta_{(g_3,\theta^2)}$ are equal to~$1$ for the special case where 
$q=2$; see Proposition~\ref{p1}. But this has been observed in~\ref{r62a} 
above; note that, for $q=2$, we have $\{f_1,f_2\}=\{\chi_1,\chi_2\}$ and
$g_1$ belongs to the class {\tt 12n} in the {\sf GAP} table. Regardless of 
how we match the functions in these two pairs, the values on $g_1\in 
{\tt 12n}$ are equal to~$8$ in both cases. Hence, we must 
have $\zeta_{(g_3, \theta)}=\zeta_{(g_3,\theta^2)}=1$.
\end{proof}

\begin{rem} \label{fine6} We briefly sketch an alternative proof of 
Proposition~\ref{r63}, following the line of argument in Lusztig 
\cite{L3}. Let us fix $g_1=su_{15}=u_{15}s$ as above. Arguing analogously 
to \ref{r62}, we can express the unipotent characters $\{\rho_x\mid
x\in \cF_0\}$ in terms of the almost characters $\{R_x\mid x\in \cF_0\}$.
In particular, we obtain:
\[E_6[\theta] =\textstyle\frac{1}{3}\bigl(R_{(80,7)}+R_{(20,10)}
-R_{(10,9)}-R_{(90,8)}+2R_{(g_3,\theta)}-R_{(g_3,\theta^2)}\bigr).\]
Now evaluate on $g_1$. The values of the almost characters $R_\epsilon$ 
($\epsilon\in\Irr(W)$) occurring in the above expression can be computed 
using the character formula in \cite[7.2.8]{C2} (which expresses the values
of the Deligne--Lusztig virtual characters $R_w$ in terms of the Green 
functions of $C_G(s)^\circ$) and the fact that Green functions take value~$1$
on regular unipotent elements (see \cite[9.16]{DeLu}). Furthermore, since all 
character values on $g_1$ are integers, we must have $\zeta_{(g_3,\theta)}=
\zeta_{(g_3, \theta^2)}=\pm 1$, and so $R_{(g_3,\theta)}(g_1)=
R_{(g_3,\theta^2)}(g_1)=\pm q^3$. This yields the condition that  
\[ E_6[\theta](g_1)=\textstyle\frac{1}{3}\bigl(\pm q^3\,+\text{ known 
value}\bigr)\in \Z,\]
which uniquely determines the sign of $\zeta_{(g_3,\theta)}$. Note
that this argument does not require the knowledge of the complete character
table of $E_6(\F_2)$, but we still need to know some information about the 
representative~$g_1$, e.g., the fact that it is conjugate to all powers 
$g_1^n$ where~$n$ is coprime to the order to~$g_1$. (A similar argument will 
actually work for $E_6(\F_q)$, where $q$ is a power of any prime $\neq 3$, 
once the existence of a representative $g_1$ as above is guaranteed; for $q$ 
a power of~$3$, a different argument is needed. See \cite{GeHe} for further 
details.)
\end{rem}

\medskip
\noindent
{\bf Acknowledgements.} We are indebted to George Lusztig and Toshiaki 
Shoji for useful comments concerning the proof of Proposition~\ref{p1}. 
Thanks are due to Frank L\"ubeck for pointing out Marcelo--Shinoda 
\cite{MaSh}, and to Thomas Breuer who provided efficient help with the various 
character table functions in {\sf GAP}; he also made available the character
table of $E_6(\F_2)$. We thank Gunter Malle and Lacri Iancu for a careful 
reading of the manuscript and critical comments. Finally, this work was 
supported by DFG SFB-TRR 195.


\end{document}